\numberwithin{equation}{section}
	\newtheorem{theorem}[equation]{Theorem}
	\newtheorem{lemma}[equation]{Lemma}
	\newtheorem{corollary}[equation]{Corollary}
	\newtheorem{proposition}[equation]{Proposition}
	\newtheorem*{theorem*}{Theorem}
\theoremstyle{definition}
	\newtheorem{definition}[equation]{Definition}
	\newtheorem{example}[equation]{Example}
        \newtheorem{assumption}[equation]{Assumption}
\theoremstyle{remark}
	\newtheorem*{remark*}{Remark}
\newcommand{\excise}[1]{}
\newcommand{\R}{\mathbb{R}}
\newcommand{\N}{\mathbb{N}}
\newcommand{\isom}{\cong}
\newcommand{\st}{\ \mid \ }
\newcommand\abs[1]{\lvert#1\rvert}
\newcommand\norm[1]{\left\lVert#1\right\rVert}
\newcommand{\eps}{\varepsilon}
\DeclareMathOperator{\rank}{rank}
\DeclareMathOperator*{\kmax}{kmax}
\begin{document}

\title{The persistence landscape and some of its properties}

\author{Peter Bubenik}
\address{Department of Mathematics, University of Florida}
\email{peter.bubenik@ufl.edu}


\begin{abstract}
  Persistence landscapes map persistence diagrams into a function space, which may often be taken to be a Banach space or even a Hilbert space. In the latter case, it is a feature map and there is an associated kernel. The main advantage of this summary is that it allows one to apply tools from statistics and machine learning. Furthermore, the mapping from persistence diagrams to persistence landscapes is stable and invertible. We introduce a weighted version of the persistence landscape and define a one-parameter family of Poisson-weighted persistence landscape kernels that may be useful for learning. 
We also demonstrate some additional properties of the persistence landscape. First, the persistence landscape may be viewed as a tropical rational function. Second, in many cases it is possible to exactly reconstruct all of the component persistence diagrams from an average persistence landscape. It follows that the persistence landscape kernel is characteristic for certain generic empirical measures. Finally, the persistence landscape distance may be arbitrarily small compared to the interleaving distance.
\end{abstract}

\maketitle
 

\section{Introduction} \label{sec:intro}

A central tool in topological data analysis is persistent homology~\cite{elz:tPaS,zomorodianCarlsson:computingPH} which summarizes geometric and topological information in data using a persistence diagram (or equivalently, a bar code).

For topological data analysis, one wants to subsequently perform statistics and machine learning. There are some approaches to doing so directly with persistence diagrams~\cite{bubenikKim:parametric,bckl:nonparametric,Blumberg:2014,mmh:probability,Robinson:2017}. However, using the standard metrics for persistence diagrams (bottleneck distance and Wasserstein distance) it is difficult to even perform such a basic statistical operation as averaging~\cite{tmmh:frechet-means,munch:probabilistic-f}.

The modern approach to alleviating these difficulties and to permit the easy application of statistical and machine learning methods is to map persistence diagrams to a Hilbert space. One way to do so is the persistence landscape~\cite{bubenik:landscapes}. It has the advantages of being invertible, so it does not lose any information, having stability properties, and being parameter-free and nonlinear (see Section~\ref{sec:pl}). 

The persistence landscape may be efficiently computed either exactly or using a discrete approximation~\cite{bubenikDlotko}. Since it loses no information (or little information in the case of the discrete approximation) it can be a large representation of the persistence diagram. Nevertheless, subsequent statistical and machine learning computations are fast, which has allowed a wide variety of applications. These include the study of:
electroencephalographic signals~\cite{Wang:2015a,Wang:2018},
protein binding~\cite{giseon:maltose},
microstructure analysis~\cite{Dlotko:2016},
phase transitions~\cite{Donato:2016},
swarm behavior~\cite{Corcoran:2016},
nanoporous materials~\cite{Lee:2017,Lee:2018}, 
fMRI data~\cite{Stolz:2017,Stolz:2018},
coupled oscillators~\cite{Stolz:2017},
brain geometry~\cite{Garg:2017,Garg:2017a},
detecting financial crashes~\cite{Gidea:2018},
shape analysis~\cite{Patrangenaru:2018},
histology images~\cite{Chittajallu:2018},
music audio signals~\cite{Liu:2016}, and
the microbiome~\cite{Petrov:2017}.

In this paper we introduce a weighted version of the persistence landscape (Section~\ref{sec:weighted}). In some applications it has been observed that it is not the longest bars that are the most relevant, but those of intermediate length~\cite{bendich:brain-artery,Patrangenaru:2018}. 
The addition of a weighting allows one to tune the persistence landscape to emphasize the feature scales of greatest interest. Since arbitrary weights allow perhaps too much flexibility, we introduce the Poisson-weighted persistence landscape kernel which has one degree of freedom.

Next we show that persistence landscapes are highly compatible with Kalisnik's tropical rational function approach to summarizing persistent homology~\cite{Kalisnik:2018}. In fact, we show that persistence landscapes are tropical rational functions (Section~\ref{sec:tropical}).

In the most technical part of the paper (Section~\ref{sec:average}), we prove that for certain finite sets of persistence diagrams, it is possible to recover these persistence diagrams exactly from their average persistence landscape (Theorem~\ref{thm:reconstruct}). Furthermore, we show that this situation is in some sense generic (Theorem~\ref{thm:generic}). This implies that the persistence landscape kernel is characteristic for certain generic empirical measures (Theorem~\ref{thm:characteristic}).

It is known that the $L^{\infty}$ distance between the two persistence landscapes associated to two persistence diagrams is upper bounded by the corresponding bottleneck distance~\cite[Theorem 13]{bubenik:landscapes}.
In the other direction, we show that this $L^{\infty}$ distance is not lower bounded by some fixed positive scalar multiple of the corresponding bottleneck distance (Section~\ref{sec:metric}).

\subsubsection*{Related work}

There are also many other ways to map persistence diagrams to a vector space or Hilbert space. These include
the Euler characteristic curve~\cite{turner:2013},
the persistence scale-space map~\cite{Reininghaus:2015},
complex vectors~\cite{DiFabio:2015},
pairwise distances~\cite{Carriere:2015},
silhouettes~\cite{cflrw:silhouettes},
the longest bars~\cite{bendich:brain-artery},
the rank function~\cite{Robins:2016},
the affine coordinate ring~\cite{adcock:2016},
the persistence weighted Gaussian kernel~\cite{Kusano:2016},
topological pooling~\cite{Bonis:2016},
the Hilbert sphere~\cite{Anirudh:2016},
persistence images~\cite{Adams:2017},
replicating statistical topology~\cite{Adler:2017a},
tropical rational functions~\cite{Kalisnik:2018},
death vectors~\cite{Patrangenaru:2018},
persistence intensity functions~\cite{Chen:2015},
kernel density estimates~\cite{Phillips:2015,Mike:2018},
the sliced Wasserstein kernel~\cite{Carriere:2017a},
the smooth Euler characteristic transform~\cite{Crawford:2016},
the accumulated persistence function~\cite{Biscio:2016},
the persistence Fisher kernel~\cite{Le:2018}, 
persistence paths~\cite{Chevyrev:2018}, and
persistence contours~\cite{Riihimaki:2018}.
Perhaps since the persistence diagram is such a rich invariant, it seems that any reasonable way of encoding it in a vector works fairly well.

\subsubsection*{Outline of the paper}

In Section~\ref{sec:background} we recall necessary background information. The next three sections contain our main results.
In Section~\ref{sec:weighted} we define the weighted persistence landscape and the Poisson-weighted persistence landscape kernel.
In Section~\ref{sec:tropical} we show that the persistence landscape may be viewed as a tropical rational function.
In Section~\ref{sec:average} we show that in a certain generic situation we are able to reconstruct a family of persistence diagrams from their average persistence landscape. 
From this it follows that the persistence landscape kernel is characteristic for certain generic empirical measures.
Finally in Section~\ref{sec:metric} we show that the $L^{\infty}$ landscape distance is not lower bounded by a fixed positive scalar multiple of the bottleneck distance.

\section{Background} \label{sec:background}

\subsection{Persistence modules, persistence diagrams, and bar codes} \label{sec:persistence}

A \emph{persistence module}~\cite{bubenikScott:1} $M$ consists of a vector space $M(a)$ for each real number $a$, and for each $a \leq b$ a linear map $M(a\leq b): M(a) \to M(b)$ such that for $a \leq b \leq c$, $M(b \leq c) \circ M(a \leq b) = M(a \leq c)$.
Persistence modules arise in topological data analysis from homology (with coefficients in some field) of a filtered simplicial complex (or a filtered topological space).

In many cases, a persistence module can be completely represented by a collection of intervals called a \emph{bar code}~\cite{Collins:2004}. Another representation of the bar code is the \emph{persistence diagram}~\cite{cseh:stability} consisting of pairs $\{(a_j,b_j)\}_{j \in J}$ which are the end points of the intervals in the bar code.

In computational settings there are always only finitely many points in the persistence diagram and it is usually best to truncate intervals in the bar code that persist until the maximum filtration value at that value.
Thus we make the following assumption.

\begin{assumption} \label{ass:finite-intervals}
Throughout this paper, we will assume that persistence diagrams consist of finitely many points $(b,d)$ with $-\infty< b < d < \infty$.
\end{assumption}
 
One way of measuring distance between persistence modules is the \emph{interleaving distance}~\cite{ccsggo:interleaving}. Similarly, one can measure distance between persistence diagrams is the \emph{bottleneck distance}~\cite{cseh:stability}. The two distances are related by the \emph{isometry theorem}~\cite{ccsggo:interleaving,Lesnick:2015,bubenikScott:1}. These distances induce a topology on the space of persistence modules and the space of persistence diagrams~\cite{bubenikVergili}.  

Sometimes we will consider \emph{sequences of persistence diagrams} $D_1,\ldots,D_n$ for fixed $n$. When we do so, we will consider this sequence to be a point $(D_1,\ldots,D_n)$ in the product space of $n$ persistence diagrams with the product metric.
That is, 
\begin{equation} \label{eq:product-metric}
d\left((D_1,\ldots,D_n),(D'_1,\ldots,D'_n)\right) = \max \left\{ d_B(D_1,D'_1), \ldots, d_B(D_n,D'_n) \right\}. 
\end{equation}
This metric induces the product topology.

\subsection{Persistence landscapes and average persistence landscapes} \label{sec:pl}

We give three equivalent definitions of the persistence landscape~\cite{bubenik:landscapes}.

Given a persistence module, $M$, we may define the \emph{persistence landscape} as the function $\lambda: \N \times \R \to \R$ given by
\begin{equation*}
  \lambda(k,t) = \sup ( h \geq 0 \st \rank M(t-h \leq t+h) \geq k).
\end{equation*}
More concretely, for a bar code, $B = \{I_j\}$, we can define the persistence landscape by
\begin{equation*}
  \lambda(k,t) = \sup ( h \geq 0 \st [t-h,t+h] \subset I_j \text{ for at least $k$ distinct }j).
\end{equation*}
For a persistence diagram $D = \{(a_i,b_i)\}_{i \in I}$, we can define the persistence landscape as follows.
First, for $a < b$, define
\begin{equation*}
  f_{(a,b)}(t) = \max(0, \min(a+t,b-t)).
\end{equation*}
Then
\begin{equation*}
  \lambda(k,t) = \kmax \, \{f_{(a_i,b_i)}(t)\}_{i \in I},
\end{equation*}
where $\kmax$ denotes the $k$th largest element.

The persistence landscape may also be considered to be a sequence of functions $\lambda_1,\lambda_2,\ldots:\R \to \R$, where $\lambda_k$ is called the $k$th \emph{persistence landscape function}.
The function $\lambda_k$ is piecewise linear with slope either $0$, $1$, or $-1$. The \emph{critical points} of $\lambda_k$ are those values of $t$ at which the slope changes.
The set of \emph{critical points} of the persistence landscape $\lambda$ is the union of the sets of critical points of the functions $\lambda_k$.
A persistence landscape may be computed by finding its critical points and also encoded by the sequences of critical points of the persistence landscape functions~\cite{bubenikDlotko}. 

The \emph{average persistence landscape}~\cite{bubenik:landscapes,cflrw:silhouettes} of the persistence landscapes $\lambda^{(1)},\ldots,\lambda^{(N)}$ is given by 
\begin{equation*}
  \bar{\lambda}(k,t) = \frac{1}{N} \sum_{j=1}^N \lambda^{(j)}(k,t).
\end{equation*}
We can also consider $\bar{\lambda}$ to be given by a sequence of functions $\bar{\lambda}_k = \frac{1}{N} \sum_{j=1}^N \lambda_k^{(j)}(t)$.

\subsection{Feature maps and kernels} \label{sec:kernel}

Let $\mathcal{S}$ be a set. A function $F: \mathcal{S} \to \mathcal{H}$ where $\mathcal{H}$ is a Hilbert space is called a \emph{feature map}.
A \emph{kernel} on $\mathcal{S}$ is a symmetric map
$K:\mathcal{S} \times \mathcal{S} \to \R$
such that for every $n$ and all $x_1,\ldots,x_n \in \mathcal{S}$ and  $a_1,\ldots,a_n \in \R$, $\sum_{i,j=1}^n a_ia_jK(x_i,x_j) \geq 0$.
A \emph{reproducing kernel Hilbert space (RKHS)} on a set $\mathcal{S}$ is a Hilbert space of real-valued functions on $\mathcal{S}$ such that the pointwise evaluation functional is continuous. 

Given a feature map there is an associated \emph{kernel} given by
\begin{equation*}
  K(x,y) = \langle(F(x),F(y) \rangle_{\mathcal{H}}.
\end{equation*}
Given a kernel, $K$, there is an associated \emph{reproducing kernel Hilbert space (RKHS)}, $\mathcal{H}_K$, which is the completion of the span of the functions $K_x: \mathcal{S} \to \R$ given by $K_x(y) = K(x,y)$, for all $x \in \mathcal{S}$, with respect to the inner product given by $\langle K_x, K_y \rangle = K(x,y)$.

Now assume that we have a $\sigma$-algebra $\mathcal{A}$ on $\mathcal{S}$. 
One can map measures on $(\mathcal{S},\mathcal{A})$ to $\mathcal{H}_K$ via the map $\Phi_K: \mu \mapsto \int_{\mathcal{S}} K(\cdot,x) \, d\mu(x)$ (when this is well defined). This map is called the \emph{kernel mean embedding}. Let $\mathcal{M}$ be a set of measures on $\mathcal{S}$. 
The kernel $K$ is said to be \emph{characteristic} over $\mathcal{M}$ if the kernel mean embedding is injective on $\mathcal{M}$.

\subsection{Properties of the persistence landscape} \label{sec:properties}

We recall some established properties of the persistence landscape.

\subsubsection{Invertibility}

The following is given informally in~\cite[Section 2.3]{bubenik:landscapes}. It is proved more formally and precisely in~\cite{bbe:graded-pd} where it is shown that the critical points of the persistence landscapes are obtained from a graded version of the rank function via M\"obius inversion.

\begin{theorem}
  The mapping from persistence diagrams to persistence landscapes is invertible.
\end{theorem}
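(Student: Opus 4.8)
The plan is to exhibit an explicit left inverse, recovering the bar code $B=\{I_j\}$ --- equivalently the persistence diagram $D=\{(b_j,d_j)\}$ of its endpoints --- from an arbitrary persistence landscape, presented as a sequence $\lambda_1\geq\lambda_2\geq\cdots\geq 0$ of piecewise-linear functions. The bridge is the rank function. By the bar-code description of the landscape in Section~\ref{sec:pl}, together with the fact that (Assumption~\ref{ass:finite-intervals}) there are only finitely many bars, for $t\in\R$, $k\geq 1$ and $h>0$ one has $\lambda_k(t)\geq h$ if and only if $[t-h,t+h]$ is contained in at least $k$ of the bars. Hence, defining $r(s,u):=\#\{\,j\st [s,u]\subseteq I_j\,\}$ for $s<u$ and substituting $t=\tfrac{s+u}{2}$, $h=\tfrac{u-s}{2}>0$,
\begin{equation*}
  r(s,u)=\#\{\,k\geq 1 \st \lambda_k\big(\tfrac{s+u}{2}\big)\geq\tfrac{u-s}{2}\,\},
\end{equation*}
a finite number since $\lambda_k(t)>0$ forces $k$ to be at most the number of bars containing a neighbourhood of $t$. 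As $(s,u)$ ranges over $\{s<u\}$ the corresponding $(t,h)$ ranges over $\{h>0\}$ bijectively, so $\lambda$ determines $r$ everywhere on $\{s<u\}$.

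I then recover $D$ from $r$ by a finite inclusion--exclusion, which is the M\"obius-inversion step carried out precisely in~\cite{bbe:graded-pd}. Writing $b_j<d_j$ for the endpoints of $I_j$, one has $r(s,u)=\#\{\,j\st b_j\leq s,\ u\leq d_j\,\}$ (up to the chosen convention at endpoints), so for $b<d$ and small $\eps>0$ the mixed difference
\begin{equation*}
  \Delta_\eps(b,d):=r(b,d)-r(b-\eps,d)-r(b,d+\eps)+r(b-\eps,d+\eps)
\end{equation*}
counts the bars whose left endpoint lies in $(b-\eps,b]$ and whose right endpoint lies in $[d,d+\eps)$. Because $D$ is finite, for $\eps$ smaller than the least gap between distinct birth- or death-values occurring in $D$ (and smaller than $d-b$) this collapses to $\#\{\,j\st b_j=b,\ d_j=d\,\}$; thus $m(b,d):=\lim_{\eps\to 0^+}\Delta_\eps(b,d)$ is exactly the multiplicity of $(b,d)$ in $D$. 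Consequently $D$ equals the multiset $\{(b,d)\st m(b,d)>0\}$ with these multiplicities, and since $m$ is computed from $r$, which is computed from $\lambda$, the assignment $D\mapsto\lambda$ has a left inverse; being well defined by construction, it is therefore invertible onto its image.

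The argument is conceptually simple, and the work is bookkeeping. The one place needing genuine care is the open/closed convention for the bars: one must use the convention compatible with the tent functions $f_{(a,b)}(t)=\max(0,\min(a+t,b-t))$ of Section~\ref{sec:pl}, so that ``$\lambda_k(t)\geq h$'' corresponds to \emph{closed} containment in $k$ bars (and $r$ is recovered with the right (non)strictness); restricting to $h>0$, i.e.\ to $s<u$, sidesteps the one genuinely ambiguous value $h=0$, and the diagonal is irrelevant for reconstructing $D$. One must also observe that $\lim_{\eps\to 0^+}\Delta_\eps(b,d)$ stabilizes --- this is exactly where finiteness of $D$ enters --- and that $m$ is supported on finitely many points, which holds because $r$ is a step function whose jumps are controlled by the finitely many critical points of $\lambda$. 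A more computational alternative is an onion-peeling scheme directly on critical points: $\lambda_1$ is the upper envelope of the tents $f_{(b_j,d_j)}$, so one reads a maximal tent off the critical points of $\lambda_1$, subtracts it, re-sorts the remaining envelope, and recurses; but tracking critical points through the subtraction is fussier than the rank-function route, so I would present the latter. I expect the endpoint-convention bookkeeping to be the only step requiring real attention.
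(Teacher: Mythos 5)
Your proof is correct and takes essentially the same route as the paper's (cited) proof: the theorem is deferred to \cite{bubenik:landscapes} and \cite{bbe:graded-pd}, where invertibility is likewise obtained by recovering the rank function from the landscape and then applying M\"obius inversion --- your inclusion--exclusion mixed difference $\Delta_\eps(b,d)$ --- to read off the diagram. The endpoint-convention and finiteness points you flag are exactly the right caveats and are handled correctly.
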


\subsubsection{Stability} \label{sec:stability}

The persistence landscape is stable in the following sense.
 
\begin{theorem}[{\cite[Theorem 13]{bubenik:landscapes}}] \label{thm:stability-bottleneck}
  Let $D$ and $D'$ be two persistence diagrams and let $\lambda$ and $\lambda'$ be their persistence landscapes. Then for all $k$ and all $t$,
  \begin{equation*}
    \abs{\lambda_k(t) - \lambda'_k(t)} \leq d_B(D,D'),
  \end{equation*}
where $d_B$ denotes the bottleneck distance.
\end{theorem}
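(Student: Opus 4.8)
The plan is to reduce the inequality to a pointwise statement at a fixed \((k,t)\) and then combine two elementary stability facts: one for a single function \(f_{(a,b)}\) and one for the operation \(\kmax\). Fix \(k \in \N\) and \(t \in \R\), and let \(\eps > d_B(D,D')\) be arbitrary; it suffices to prove \(\abs{\lambda_k(t) - \lambda_k'(t)} \le \eps\) and then let \(\eps \downarrow d_B(D,D')\). By the definition of the bottleneck distance there is a partial matching \(\gamma\) between \(D\) and \(D'\) in which every matched pair \((p,q)\) satisfies \(\norm{p-q}_{\infty} \le \eps\), and every point of \(D\) or of \(D'\) left unmatched lies within \(\ell^{\infty}\)-distance \(\eps\) of the diagonal \(\Delta\).

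The first ingredient is that each \(f_{(a,b)}\) is \(1\)-Lipschitz in its parameters, uniformly in \(t\): for all \(t\),
\[
  \abs{f_{(a,b)}(t) - f_{(a',b')}(t)} \le \norm{(a,b) - (a',b')}_{\infty},
\]
which holds because \(\min(\cdot,\cdot)\) and \(\max(0,\cdot)\) are \(1\)-Lipschitz and the pairs \((a+t,b-t)\) and \((a'+t,b'-t)\) differ by a vector of \(\ell^{\infty}\)-norm \(\norm{(a,b)-(a',b')}_{\infty}\). The second ingredient is that \(\sup_{t} f_{(a,b)}(t)\) equals the \(\ell^{\infty}\)-distance from the diagram point \((a,b)\) to \(\Delta\); in particular \(0 \le f_p(t) \le \eps\) for every \(t\) whenever \(p\) is within distance \(\eps\) of \(\Delta\). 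Hence along \(\gamma\) a matched pair of functions \(f_p, f_q\) differs by at most \(\eps\) at every \(t\), while each unmatched point contributes a function taking values in \([0,\eps]\).

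To finish I invoke the stability of \(\kmax\). Augment the finite lists \(\{f_p(t) : p\in D\}\) and \(\{f_q(t) : q\in D'\}\) with infinitely many copies of \(0\), so that \(\lambda_k(t)\) and \(\lambda_k'(t)\) are by definition the \(k\)th largest entries of the two augmented lists. Extend \(\gamma\) to a bijection between these augmented index sets by pairing each unmatched diagram point with a padding \(0\) and pairing the remaining \(0\)s with one another. By the previous paragraph every pair produced by this bijection consists of values differing by at most \(\eps\). Now let \(x^{\star}\) be the \(k\)th largest entry of the first list; at least \(k\) of its entries are \(\ge x^{\star}\), and their images under the bijection are \(k\) entries of the second list, each \(\ge x^{\star} - \eps\), so the \(k\)th largest entry of the second list is \(\ge x^{\star} - \eps\). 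This gives \(\lambda_k'(t) \ge \lambda_k(t) - \eps\); by symmetry \(\abs{\lambda_k(t) - \lambda_k'(t)} \le \eps\), and letting \(\eps \downarrow d_B(D,D')\) completes the proof.

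The step that needs the most care is the bookkeeping in this last paragraph: one must pad both lists to a common countable index set, check that \(\gamma\) together with the diagonal assignment really does extend to a value-wise \(\eps\)-close bijection, and handle ties and lists shorter than \(k\) correctly — which is precisely why padding with zeros (rather than leaving \(\lambda_k\) undefined when there are fewer than \(k\) points) is the convenient convention. The two single-function inequalities, by contrast, are immediate.
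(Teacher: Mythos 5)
Your proof is correct, but note that this paper does not actually prove Theorem~\ref{thm:stability-bottleneck}: it is recalled, without proof, from \cite[Theorem 13]{bubenik:landscapes}, so the comparison is with that source, where the argument is module-theoretic — a matching of cost $\eps$ yields an $\eps$-interleaving of the associated persistence modules, and the rank-function definition $\lambda(k,t)=\sup(h\ge 0 \st \rank M(t-h\le t+h)\ge k)$ then gives $\abs{\lambda_k(t)-\lambda'_k(t)}\le d_i(M,M')$, from which the bottleneck statement follows. You instead argue entirely at the level of diagrams and the $\kmax$ definition: matched tent functions are uniformly $\eps$-close because $f_{(a,b)}(t)$ is $1$-Lipschitz in $(a,b)$ for the sup norm, unmatched ones are bounded by $\eps$ because $\sup_t f_{(a,b)}$ equals the $\ell^\infty$-distance of $(a,b)$ to the diagonal, and the $k$th largest value is stable under a value-wise $\eps$-close bijection of zero-padded lists. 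This is more elementary and self-contained (no interleavings, no isometry theorem), and the zero-padding cleanly handles $k$ larger than the number of points; what the module route buys is the stronger interleaving form of stability recorded right after this theorem, which your diagram-level argument does not directly give. Two minor points: the printed formula $f_{(a,b)}(t)=\max(0,\min(a+t,b-t))$ in Section~\ref{sec:pl} is evidently a typo for $\max(0,\min(t-a,b-t))$, and it is your second ingredient (sup equals distance to the diagonal) that requires the corrected formula, while your Lipschitz step is valid either way; also, since diagrams are finite here (Assumption~\ref{ass:finite-intervals}), the infimum defining $d_B$ is attained, so the limit $\eps\downarrow d_B(D,D')$, though perfectly fine, is not strictly needed.
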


More generally, we have the following.
\begin{theorem}[{\cite[Theorem 17]{bubenik:landscapes}}] 
  Let $M$ and $M'$ be two persistence modules and let $\lambda$ and $\lambda'$ be their persistence landscapes. Then for all $k$ and all $t$,
  \begin{equation*}
    \abs{\lambda_k(t) - \lambda'_k(t)} \leq d_i(M,M'),
  \end{equation*}
where $d_i$ denotes the interleaving distance.
\end{theorem}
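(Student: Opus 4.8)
The plan is to work directly with the rank-function definition of the landscape,
$\lambda_k(t) = \sup(\, h \geq 0 \st \rank M(t-h \leq t+h) \geq k\,)$, rather than passing through persistence diagrams, which for a general persistence module need not exist. Write $\lambda = \lambda^M$ and $\lambda' = \lambda^{M'}$, and suppose $M$ and $M'$ are $\delta$-interleaved for some $\delta \geq 0$. It suffices to prove $\abs{\lambda_k(t) - \lambda'_k(t)} \leq \delta$ for all $k$ and $t$, and then take the infimum over all such $\delta$.

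First I would record two elementary facts. (i) For fixed $t$, the quantity $\rank M(t-h \leq t+h)$ is non-increasing in $h \geq 0$, since for $h_1 \leq h_2$ the map $M(t-h_2 \leq t+h_2)$ factors through $M(t-h_1 \leq t+h_1)$; hence $\{h \geq 0 \st \rank M(t-h \leq t+h) \geq k\}$ is a down-set, so $\rank M(t-h \leq t+h) \geq k$ holds for every $h < \lambda_k(t)$. (ii) The key comparison: if $M$ and $M'$ are $\delta$-interleaved via maps $\phi_\bullet \colon M(\bullet) \to M'(\bullet + \delta)$ and $\psi_\bullet \colon M'(\bullet) \to M(\bullet + \delta)$, then for any $a \leq b$ with $a + 2\delta \leq b$ one has $\rank M(a \leq b) \leq \rank M'(a+\delta \leq b-\delta)$. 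This follows from the interleaving identity $\psi_{a+\delta} \circ \phi_a = M(a \leq a+2\delta)$ together with the commuting square $M(a+2\delta \leq b) \circ \psi_{a+\delta} = \psi_{b-\delta} \circ M'(a+\delta \leq b-\delta)$, which yield the factorization $M(a \leq b) = \psi_{b-\delta} \circ M'(a+\delta \leq b-\delta) \circ \phi_a$.

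Now fix $k$ and $t$, and set $h^* = \lambda_k(t)$. If $h^* \leq \delta$ then $\lambda'_k(t) \geq 0 \geq h^* - \delta$ and there is nothing to prove. Otherwise take any $h$ with $\delta \leq h < h^*$. By (i) we have $\rank M(t-h \leq t+h) \geq k$, and applying (ii) with $a = t-h$ and $b = t+h$ (valid since $h \geq \delta$) gives $\rank M'\big((t-(h-\delta)) \leq (t+(h-\delta))\big) \geq k$; since $h - \delta \geq 0$, this shows $\lambda'_k(t) \geq h - \delta$. Letting $h \uparrow h^*$ yields $\lambda'_k(t) \geq \lambda_k(t) - \delta$. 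The symmetric argument, swapping the roles of $M$ and $M'$ (and of $\phi$ and $\psi$), gives $\lambda_k(t) \geq \lambda'_k(t) - \delta$, so $\abs{\lambda_k(t) - \lambda'_k(t)} \leq \delta$. Taking the infimum over all $\delta$ for which $M$ and $M'$ are $\delta$-interleaved completes the proof.

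I do not anticipate a serious obstacle. The only points needing care are the diagram chase behind fact (ii) — one must unwind the definition of a $\delta$-interleaving to verify the commuting square $M(a+2\delta \leq b) \circ \psi_{a+\delta} = \psi_{b-\delta} \circ M'(a+\delta \leq b-\delta)$ — and the handling of the supremum, which is why I pass to $h < h^*$ and take a limit rather than assume the supremum is attained. (One could instead deduce the statement from Theorem~\ref{thm:stability-bottleneck} and the isometry theorem, but that requires $M$ and $M'$ to have well-defined persistence diagrams, so the direct argument above is both shorter and more general.)
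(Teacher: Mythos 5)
Your proof is correct; note that the paper does not prove this statement itself but simply cites \cite[Theorem 17]{bubenik:landscapes}, and your argument---using the interleaving maps to factor $M(a\leq b)$ as $\psi_{b-\delta}\circ M'(a+\delta\leq b-\delta)\circ\phi_a$, comparing ranks, and translating this into $\lambda'_k(t)\geq\lambda_k(t)-\delta$ via the rank-function definition---is essentially the same argument as in that reference. The only conventions worth making explicit are that $\lambda'_k(t)\geq 0$ (i.e.\ the supremum over an empty set of $h$ is taken to be $0$) and the degenerate cases $\lambda_k(t)=\infty$ or $d_i(M,M')=\infty$, both of which your limiting step and the final infimum over $\delta$ already handle.
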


As a special case of Theorem~\ref{thm:stability-bottleneck}, we have the following.
\begin{corollary}
Given the two persistence diagrams $D = \{(a_1,b_1),\ldots,(a_n,b_n)\}$ and $D' = \{(a'_1,b'_1),\ldots,(a'_n,b'_n)\}$,
let $\lambda$ and $\lambda'$ be the associated persistence landscapes.
Then
  \begin{equation*}
    \norm{\lambda - \lambda'}_{\infty}
    \leq \norm{(a_1,b_1,\ldots,a_n,b_n)-(a'_1,b'_1,\ldots,a'_n,b'_n)}_{\infty}.
  \end{equation*}
\end{corollary}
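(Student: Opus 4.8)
The plan is to deduce this directly from Theorem~\ref{thm:stability-bottleneck} by exhibiting one particular matching between $D$ and $D'$ whose cost equals the right-hand side. Since the bottleneck distance is an infimum over all matchings, this single matching yields an upper bound on $d_B(D,D')$, and Theorem~\ref{thm:stability-bottleneck} then transfers that bound to the landscapes.

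First I would recall that $d_B(D,D')$ is the infimum, over all partial matchings $\gamma$ between the points of $D$ and $D'$ (with unmatched points allowed to be sent to the diagonal), of the supremum over the pairs matched by $\gamma$ of the $\ell^\infty$-distances between matched points. Because $D$ and $D'$ each have exactly $n$ points, indexed compatibly, the assignment $\gamma_0$ sending $(a_i,b_i)$ to $(a'_i,b'_i)$ for every $i$ is a legitimate (indeed bijective) matching, using no diagonal points. Its cost is
\begin{equation*}
  \max_{1 \le i \le n} \norm{(a_i,b_i) - (a'_i,b'_i)}_\infty = \max_{1 \le i \le n} \max\left( \abs{a_i - a'_i},\ \abs{b_i - b'_i} \right),
\end{equation*}
and the latter is plainly equal to $\norm{(a_1,b_1,\ldots,a_n,b_n)-(a'_1,b'_1,\ldots,a'_n,b'_n)}_\infty$, the $\ell^\infty$-norm taken over all $2n$ coordinates. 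Hence $d_B(D,D') \le \norm{(a_1,b_1,\ldots,a_n,b_n)-(a'_1,b'_1,\ldots,a'_n,b'_n)}_\infty$.

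Next, by Theorem~\ref{thm:stability-bottleneck} we have $\abs{\lambda_k(t) - \lambda'_k(t)} \le d_B(D,D')$ for every $k \in \N$ and every $t \in \R$. Taking the supremum over $k$ and $t$ gives $\norm{\lambda - \lambda'}_\infty \le d_B(D,D')$, and chaining this with the inequality of the previous paragraph yields the claim.

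I do not expect a genuine obstacle here; the only points requiring a moment's care are to fix the convention that the bottleneck distance uses the pointwise $\ell^\infty$ metric on $\R^2$, so that the coordinatewise matching cost matches the stated $\ell^\infty$-norm on $\R^{2n}$, and to note that we need only an upper bound on $d_B(D,D')$, so it is irrelevant whether the coordinatewise matching $\gamma_0$ is optimal.
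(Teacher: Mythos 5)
Your argument is correct and is exactly the route the paper intends: the corollary is stated as a special case of Theorem~\ref{thm:stability-bottleneck}, obtained by noting that the index-preserving matching bounds $d_B(D,D')$ by the $\ell^\infty$-norm of the coordinate differences. No issues.
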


In~\cite{Chazal:2014c} it is shown that the average persistence landscape is stable.

\subsubsection{The persistence landscape kernel}
\label{sec:pers-lansc-kern}
 
Since the persistence landscape is a feature map from the set of persistence diagrams to $L^2(\N \times \R)$ there is an associated kernel we call the \emph{persistence landscape kernel}~\cite{Reininghaus:2015}, given by
\begin{equation} \label{eq:pl-kernel}
  K(D^{(1)},D^{(2)}) = \langle \lambda^{(1)}, \lambda^{(2)} \rangle = \sum_{k} \int \lambda_k^{(1)} \lambda_k^{(2)} = \sum_{k=1}^{\infty} \int_{-\infty}^{\infty} \lambda_k^{(1)}(t) \lambda_k^{(2)}(t) \, dt.
\end{equation}

\subsubsection{The persistence landscapes and parameters}

One advantage of the persistence landscape is that its definition involves no parameters. So there is no need for tuning and no risk of overfitting.

\subsubsection{Nonlinearity of persistence landscapes} \label{sec:nonlinear}

Another important advantage of the persistence landscape for statistics and machine learning is its nonlinearity. 
Call a summary $S$ of persistence diagrams in a vector space \emph{linear} if for two persistence diagrams $D_1$ and $D_2$, $S(D_1 \amalg D_2) = S(D_1) + S(D_2)$.
The persistence landscape is highly non-linear.

\subsubsection{Computability of the persistence landscape}

There are fast algorithms and software for computing the persistence landscape~\cite{bubenikDlotko}. In practice, computing the persistence diagram seems to always be slower than computing the associated persistence landscape.
The methods are also available in an R package~\cite{Bouza:2018}.

\subsubsection{Convergence results for the persistence landscape}

From the point of view of statistics, we assume that data has been obtained by sampling from a random variable. Applying our persistent homology constructions, we obtain a random persistence landscape. 

This is a Banach space valued random variable. Assume that its norm has finite expectation and variance. If we take an (infinite) sequence of samples from this random variable then the average landscapes converge (almost surely) to the expected value of the random variable~\cite[Theorem 9]{bubenik:landscapes}. This is known as a (strong) law of large numbers.

Now if we consider the difference between the average landscapes and the expectation (suitably normalized), it converges pointwise to a Gaussian random variable~\cite[Theorem 10]{bubenik:landscapes}. This result was extended in~\cite{cflrw:silhouettes} to prove uniform convergence. These are central limit theorems.

\subsubsection{Confidence bands for the persistence landscape}

The bootstrap can be used to compute confidence bands~\cite{Chazal:2014a} and adaptive confidence bands~\cite{cflrw:silhouettes} for the persistence landscape. There is an R package that has implemented these computations~\cite{fasy:tda}.

\subsubsection{Subsampling and the average persistence landscape}

A useful and powerful method in large data settings is to subsample many times and compute the average persistence landscape~\cite{Chazal:2014c,Patrangenaru:2018}.
In~\cite{Chazal:2014c} it is shown that this average persistence landscape is stable and that it converges.

\subsection{Tropical rational functions} \label{sec:tropical-background}

The \emph{max-plus algebra} is the semiring over $\R \cup {-\infty}$ with the binary operations given by
\begin{align*}
  x \oplus y = \max(x,y),\\
  x \odot y = x + y.
\end{align*}
If $x_1,\ldots,x_n$ are variables representing elements in the max-plus algebra, then a product of these variables (with repetition allowed) is a \emph{max-plus monomial}.
\begin{equation*}
  x_1^{a_1} x_2^{a_2} \cdots x_n^{a_n}
  =  x_1^{a_1} \odot x_2^{a_2} \odot \cdots \odot x_n^{a_n}
\end{equation*}
A \emph{max-plus polynomial} is a finite linear combination of max-plus monomials.
\begin{equation*}
  p(x_1,\ldots,x_n) = a_1 \odot x_1^{a_1^1} x_2^{a_2^1} \cdots x_n^{a_n^1}
  \oplus a_2 \odot x_2^{a_1^2} x_2^{a_2^2} \cdots x_n^{a_n^2} \oplus \cdots
  \oplus a_m \odot x_1^{a_1^m} x_2^{a_2^m} \cdots x_n^{a_n^m}
\end{equation*}
We also call this a \emph{tropical polynomial}. 
A \emph{tropical rational function}~\cite{Kalisnik:2018} is a quotient $p \odot q^{-1}$ where $p$ and $q$ are tropical polynomials.
Note that if $r$ and $s$ are tropical rational functions, then so is $r \odot s^{-1}$.

\section{Weighted persistence landscapes} \label{sec:weighted}

In this section we introduce a class of norms and kernels for persistence landscapes. As a special case we define a one-parameter family of norms and kernels for persistence landscapes which may be useful for learning algorithms.

Recall that for real-valued functions on $\N \times \R$ we have a $p$-norm for $1 \leq p \leq \infty$. For persistence landscapes, we have for $1 \leq p < \infty$,
\begin{equation*}
  \norm{\lambda}_p = \sum_{k=1}^{\infty} \left[ \int_{-\infty}^{\infty} \lambda_k(t)^p \, dt \right]^{\frac{1}{p}},
\end{equation*}
and for $p=\infty$,
\begin{equation*}
  \norm{\lambda}_{\infty} = \sup_{k,t} \lambda_k(t).
\end{equation*}
We also have the persistence landscape kernel, introduced in \eqref{eq:pl-kernel}, given by the inner product on $\N \times \R$,
\begin{equation*}
  K(D^{(1)},D^{(2)}) = \langle \lambda^{(1)}, \lambda^{(2)} \rangle = \sum_{k} \int \lambda_k^{(1)} \lambda_k^{(2)} = \sum_{k=1}^{\infty} \int_{-\infty}^{\infty} \lambda_k^{(1)}(t) \lambda_k^{(2)}(t) \, dt.
\end{equation*}

We observe that one may use weighted versions of these norms and inner products.
That is,
given any nonnegative function $w:\N \times \R \to \R$, we have
\begin{equation*}
  \norm{\lambda}_{p,w} = \norm{w \lambda}_p,
\end{equation*}
and
\begin{equation*}
  K_w(D^{(1)},D^{(2)}) = \langle w^{\frac{1}{2}} \lambda^{(1)}, w^{\frac{1}{2}} \lambda^{(2)} \rangle.
\end{equation*}

For example, consider the following one-parameter family of kernels,
\begin{equation*}
  K_{\nu}(D^{(1)},D^{(2)}) = \sum_{k=1}^{\infty} P_{\nu}(k-1) \int_{-\infty}^{\infty} \lambda_k^{(1)}(t) \lambda_k^{(2)}(t)\, dt,
\end{equation*}
where $P_{\nu}(k) = \frac{\nu^k e^{-\nu}}{k!}$ is the Poisson distribution with parameter $\nu>0$.
Call this the \emph{Poisson-weighted persistence landscape kernel}.
This additional parameter may be useful for training classifiers using persistence landscapes.
It has an associated one-parameter family of norms given by,
\begin{equation*}
  \norm{\lambda}_{\nu} = \sum_{k=1}^{\infty} P_{\nu}(k-1) \norm{\lambda_k}_2.
\end{equation*}

Note that the distribution $P_{\nu}(k-1)$ is unimodal with maximum at $k = \lceil \nu \rceil$ and $k = \lfloor \nu \rfloor + 1$.
So by varying $\nu$ one increases the weighting of a particular range of persistence landscape functions.

We may consider the kernel $K_{\nu}$ to be associated to the feature map $D \mapsto \lambda(D)$ which maps to the Hilbert space with inner product $\langle f, g \rangle_{\nu} = \sum_k P_{\nu}(k-1) \int f_k g_k$ or the feature map $D \mapsto \sum_k \left(P_{\nu}(k-1)\right)^{\frac{1}{2}} \lambda_k(D)$ which maps to the usual Hilbert space $L^2(\N \times \R)$.

The Poisson distribution was chosen because it provides a one-parameter family of unimodal weights whose modes include all natural numbers. Other one-parameter, few-parameter, or more general weighting schemes may be useful depending on the application and the machine learning methods that are used.
For related work, consider contour learning~\cite{Riihimaki:2018}.

\section{Persistence landscapes as tropical rational functions} \label{sec:tropical}

In this section we will show that the persistence landscape is a tropical rational function.

Let $D = \{(a_i,b_i)\}_{i=1}^n$ be a persistence diagram. Recall that $-\infty < a_i < b_i < \infty$ (Assumption~\ref{ass:finite-intervals}). Recall (Section~\ref{sec:pl}) that the $k$th persistence landscape function is given by $\lambda_k(t) = \kmax_{1 \leq i \leq n} f_{(a_i,b_i)}(t)$, where $f_{(a,b)}(t) = \max(0, \min(a+t,b-t))$.

First rewrite $f_{(a,b)}$ as a tropical rational expression in one variable, $t$, as follows.

\begin{align*}
  f_{(a,b)}(t) &= \max(0, \min(a+t,b-t))\\
               &= \max (0, -\max(-(a+t),t-b))\\
               &= \max(0, -\max ( (a \odot t)^{-1}, t \odot b^{-1}) )\\
               &= \max ( 0, [(a \odot t)^{-1} \oplus (t \odot b^{-1})]^{-1} )\\
  &= 0 \oplus \left[ (a \odot t)^{-1} \oplus (t \odot b^{-1}) \right]^{-1}
\end{align*}
We may simplify the right hand term 
by using the usual rules for adding fractions.%
\footnote{That is, $\left( \frac{1}{at} + \frac{t}{b} \right)^{-1} = \frac{bat}{b+at^2}$.}
So
\begin{equation*}
  f_{(a,b)}(t) = 0 \oplus (a+b) \odot t \odot (b \oplus a \odot t^2)^{-1}.
\end{equation*}

Next consider max-plus polynomials in $n$ variables, $x_1,\ldots,x_n$.
The elementary symmetric max-plus polynomials, $\sigma_1,\ldots,\sigma_n$, are given by
\begin{equation*}
  \sigma_k(x_1,\ldots,x_n) = \oplus_{\pi \in S_n} x_{\pi(1)} \odot \cdots \odot x_{\pi(k)},
\end{equation*}
where the sum is taken over elements $\pi$ of the symmetric group $S_n$.
So $\sigma_k$ is the sum of the $k$th largest elements of $x_1,\ldots,x_n$.
Therefore,
\begin{equation*}
  \kmax_{1\leq i \leq n} x_i = \sigma_k(x_1,\ldots,x_n)  - \sigma_{k-1}(x_1,\ldots,x_n).
\end{equation*}
Thus,
\begin{equation*}
  \lambda_k(t) = \sigma_k(f_i(t)) \odot \sigma_{k-1}(f_i(t))^{-1},
\end{equation*}
where we have written $\sigma_k(x_i)$ for $\sigma_k(x_1,\ldots,x_n)$ and $f_i(t)$ for $f_{(a_i,b_i)}(t)$.
Hence, for a fixed persistence diagram $D$, we have $\lambda_k$ as a tropical rational function in one variable $t$.

However, we really want to consider $t$ as fixed and the persistence diagram as the variable. Let us change to this perspective.
To start,
consider
\begin{equation*}
  f_t(a,b) = 0 \oplus t \odot a \odot b \odot (b \oplus 2t \odot a)^{-1},
\end{equation*}
a tropical rational function in the variables $a$ and $b$.
Next,
\begin{equation*}
  \sigma_k(f_t(a_1,b_1),\ldots,f_t(a_n,b_n)) = \oplus_{\pi \in S_n} f_t(a_{\pi(1)},b_{\pi(1)}) \odot \cdots \odot f_t(a_{\pi(k)},b_{\pi(k)})
\end{equation*}
is a 2-symmetric max-plus tropical rational function in the variables $a_1,b_1,\ldots,a_n,b_n$.
Finally,
\begin{equation*}
  \lambda_{k,t}(a_1,b_1,\ldots,a_n,b_n) = \sigma_k(f_t(a_1,b_1),\ldots,f_t(a_n,b_n)) \odot
\sigma_{k-1}(f_t(a_1,b_1),\ldots,f_t(a_n,b_n))^{-1}
\end{equation*}
is also a 2-symmetric tropical rational function in the variables $a_1,b_1,\ldots,a_n,b_n$.

By the stability theorem for persistence landscapes (Section~\ref{sec:stability}), these tropical rational functions are 1-Lipschitz function from $\R^{2n}$ with the sup-norm to $\R$.

Since the mapping from persistence diagrams to persistence landscapes is invertible~\cite{bubenik:landscapes}, the persistence landscape gives us a collections of tropical rational functions $\lambda_{k,t}$ from which we can reconstruct the persistence diagrams.

In practice, we do not need to use all of the $\lambda_{k,t}$. If the values of $a_i$ and $b_i$ are only known up to some $\eps$ or if they lie on a grid of step size $2\eps$, then it suffices to use $k = 1,\ldots, K$ and $t = a, a+\eps, a+2\eps, a+2m\eps$, where $K$ is the maximal dimension of the persistence module (i.e. the maximum number of overlapping intervals in the bar code), and the interval $[a,a+2m\eps]$ contains all of the $a_i$ and $b_i$.

\section{Reconstruction of diagrams from an average persistence landscape} \label{sec:average}

In this section we will show that for certain generic finite sets of persistence diagrams, it is possible to reconstruct these sets of persistence diagrams exactly from their average persistence landscapes.
This implies that the persistence landscape kernel is characteristic for certain generic empirical measures.

Let $D_1,\ldots,D_n$ be a sequence of persistence diagrams (Section~\ref{sec:persistence}). Recall that we assume that our persistence diagrams consist of finitely many points $(b,d)$ where $\infty < b < d < \infty$ (Assumption~\ref{ass:finite-intervals}).
Let $\lambda(D_1),\ldots,\lambda(D_n)$ denote their corresponding persistence landscapes (Section~\ref{sec:pl}) and let $\bar{\lambda} := \frac{1}{n} \sum_{k=1}^n \lambda(D_k)$ denote their average landscape.
We can summarize this construction as a mapping
\begin{equation}
  \label{eq:al}
  (D_1,\ldots,D_n) \mapsto \bar{\lambda} = \bar{\lambda}(D_1,\ldots,D_n)
\end{equation}
We will show that in many cases, this map is invertible.

\subsection{Noninvertibility and connected persistence diagrams}
\label{sec:noninvertibility}

We start with a simple example where the map in \eqref{eq:al} is not one-to-one and hence not invertible.

Consider $D_1 = \{(0,2)\}$, $D_2 = \{(4,6)\}$, $D'_1 = \{(0,2),(4,6)\}$, and $D'_2 = \emptyset$.
Then $\lambda(D_1) + \lambda(D_2) = \lambda(D'_1) = \lambda(D'_1) + \lambda(D'_2)$.
So the average landscape of $\{D_1,D_2\}$ equals the average landscape of $\{D'_1,D'_2\}$.

The map \eqref{eq:al} fails to be invertible because the union of the intervals in the bar code (Section~\ref{sec:persistence}) corresponding to the persistence diagram $D'_1$ is disconnected.
However, in many applications we claim that this behavior is atypical. To make this claim precise we need the following definition.

\begin{definition} \label{def:graph}
  Let $B$ be a bar code consisting of intervals $\{I_j\}_{j \in J}$. Define the \emph{graph} of $B$ to be the graph whose vertices are the intervals $I_j$ and whose edges $\{I_j,I_k\}$ consists of pairs of intervals with nonempty intersection, $I_j \cap I_k \neq \emptyset$.
\end{definition}

For many geometric processes~\cite[Figure 2.2]{Adler:2010} and in applications~\cite[Figure 5]{Gameiro:2015b}, as the number of intervals in the bar code increases, the corresponding graphs seem to have a \emph{giant component}~\cite[Chapter 6]{Bollobas:book}.
Note that any gaps in the union of intervals in the bar code only occur where the corresponding Betti number is zero. So there will be no gaps in a range of parameter values where all of the corresponding Betti numbers are nonzero.

\subsection{Bipartite graph of a persistence diagram}
\label{sec:bipartite-graph}

Let $D = \{(a_j,b_j)\}_{j \in J}$ be a persistence diagram.

\begin{definition} \label{def:generic-pd}
  Say that the persistence diagram $D$ is \emph{generic} if for each $j \neq k \in J$, the four numbers $a_j,b_j,a_k,b_k$ are distinct.
\end{definition}

\begin{definition} \label{def:bipartite}
  Let $D$ be a generic persistence diagram. Let $B(D)$ be the \emph{bipartite graph} of $D$ consisting of the disjoint vertex sets $U = \{a_j\}_{j \in J}$ and $V = \{b_j\}_{j \in J}$ and edges consisting of $(a_j,b_j)$ for each $j \in J$ and $(a_k,b_j)$ for each pair $j,k \in J$ satisfying $a_j < a_k < b_j < b_k$.
\end{definition}

\begin{proposition} \label{prop:reconstruct}
  We can reconstruct a generic persistence diagram $D$ from its bipartite graph.
\end{proposition}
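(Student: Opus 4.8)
The plan is to induct on the number of points $\abs{J}$ in $D$, peeling off one interval at a time by exploiting an extremal vertex of the bipartite graph $B(D)$. Note that $B(D)$ determines the set $U$ of all birth values and the set $V$ of all death values; what must be recovered is the pairing $j \mapsto (a_j, b_j)$.

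The key observation is that the vertex $a^{\ast} := \min U$ has degree exactly $1$ in $B(D)$. Writing $a^{\ast} = a_{j_0}$, the pair $(a_{j_0}, b_{j_0})$ is always an edge of $B(D)$, so the degree is at least $1$. Conversely, any edge of $B(D)$ incident to $a^{\ast}$ is either the diagram edge $(a_{j_0}, b_{j_0})$ or a crossing edge of the form $(a_k, b_j)$ with $a_k = a^{\ast}$ and $a_j < a_k < b_j < b_k$; the latter forces $a_j < a^{\ast} = \min U$, which is impossible. Hence $a^{\ast}$ has a unique neighbor, and that neighbor is precisely $b_{j_0}$, the death value paired with $a^{\ast}$ in $D$. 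So from $B(D)$ one reads off the smallest birth value and immediately identifies which point of $D$ it belongs to. (By a symmetric argument, $\max V$ also has degree $1$, with its unique neighbor the birth value it is paired with, so either extremal vertex could be used.)

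Next I would verify that deleting the interval $I_{j_0} = (a_{j_0}, b_{j_0})$ is compatible with passing to an induced subgraph: the bipartite graph $B(D')$ of the (still generic) persistence diagram $D' := D \setminus \{(a_{j_0}, b_{j_0})\}$ equals the subgraph of $B(D)$ induced on the vertex sets $U \setminus \{a_{j_0}\}$ and $V \setminus \{b_{j_0}\}$. Indeed, the diagram edges of $B(D')$ are exactly those diagram edges of $B(D)$ not involving $j_0$, and the defining inequality $a_j < a_k < b_j < b_k$ for a crossing edge refers only to the two intervals $I_j, I_k$; thus an edge of $B(D)$ joining two surviving vertices is a crossing edge of $B(D')$ if and only if it is one of $B(D)$, and no surviving edge of $B(D)$ can involve $I_{j_0}$ since both $a_{j_0}$ and $b_{j_0}$ have been removed. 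Here genericity is what guarantees that deleting the two real numbers $a_{j_0}, b_{j_0}$ from $U \cup V$ corresponds to deleting exactly the one interval $I_{j_0}$. With both facts in hand, the reconstruction procedure is: locate $a^{\ast} = \min U$, record its unique neighbor $b^{\ast}$ as its partner, remove both vertices, and recurse on the induced subgraph; the base case $\abs{J} \le 1$ is immediate.

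The argument is purely combinatorial, and I do not expect a serious obstacle. The one point requiring care is the induced-subgraph step: one must check both that every edge of $B(D)$ among the surviving vertices persists in $B(D')$ and that no new edges appear — i.e.\ that the crossing-edge relation is "local" to pairs of intervals — and one must invoke genericity so that the passage from $D$ to $D'$ is mirrored exactly by deleting a single vertex from each side of $B(D)$.
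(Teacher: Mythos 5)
Your proof is correct, but it takes a different route from the paper. You induct on the number of points, observing that the minimal birth value $a^{\ast}=\min U$ has degree exactly $1$ in $B(D)$ (a crossing edge at $a^{\ast}=a_k$ would force $a_j<a_k=\min U$), reading off its partner as its unique neighbor, and then verifying that deleting that point of $D$ corresponds exactly to passing to the induced subgraph on the remaining vertices --- the step you rightly flag as needing genericity and the locality of the crossing relation. The paper instead reconstructs all pairs at once, with no induction and no deletion lemma: for \emph{every} $a\in U$, any crossing edge $\{a,c\}$ with $a=a_k$, $c=b_j$ satisfies $c=b_j<b_k$, so the partner of $a$ is simply the \emph{maximum} neighbor of $a$ in $V$. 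Both arguments exploit the same asymmetry in the condition $a_j<a_k<b_j<b_k$ (you use the first inequality at the extremal birth, the paper uses the last inequality at an arbitrary birth); the paper's version buys a shorter, non-recursive proof that sidesteps the induced-subgraph verification, while yours isolates a structural fact (extremal vertices have degree one and the graph peels cleanly) that is slightly more work but equally sound.
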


\begin{proof}
  Let $D$ be a generic persistence diagram. Let $B(D)$ be its bipartite graph.
  Let $U$ and $V$ be the disjoint vertex sets of $B(D)$.
  By definition, $U$ consists of the set of first coordinates of the points in $D$, and $V$ consists of the set of second coordinates of the points in $D$.
  By assumption, these coordinates are unique.
  Let $a \in U$.
  By the definition of $B(D)$, there exists $b \in V$ such that $\{a,b\}$ is an edge in $B(D)$ and $(a,b) \in D$.
  Also by definition, for all $c \in V$ such at $\{a,c\}$ is an edge in $B(D)$, $c \leq b$.
  Thus, for all $a \in U$, let $b = b(a)$ be the maximum element of $V$ such that $\{a,b\}$ is an edge in $B(D)$.
  The resulting pairs $(a,b)$ are exactly $D$.
\end{proof}

\begin{definition}
  Say that a persistence diagram is \emph{connected} if the graph~(Definition~\ref{def:graph}) of its barcode is connected.
\end{definition}

\begin{lemma}
   A generic persistence diagram is connected if and only if its bipartite graph is connected.
\end{lemma}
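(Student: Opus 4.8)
The plan is to relate connectivity of the two graphs by comparing their connected components directly. First I would set up notation: let $D = \{(a_j,b_j)\}_{j \in J}$ be a generic persistence diagram, let $G$ be the graph of its bar code (Definition~\ref{def:graph}), whose vertices are the intervals $I_j = [a_j,b_j]$, and let $B(D)$ be the bipartite graph (Definition~\ref{def:bipartite}), whose vertices are the endpoints $\{a_j\} \cup \{b_j\}$. Genericity guarantees all $2\abs{J}$ endpoints are distinct, so each interval $I_j$ corresponds unambiguously to the edge $\{a_j,b_j\}$ of $B(D)$. The key observation is that $I_j$ and $I_k$ intersect (with, say, $a_j < a_k$) exactly when $a_k < b_j$, and — after ruling out nested intervals, which I address below — this is precisely the condition $a_j < a_k < b_j < b_k$ that puts the extra edge $\{a_k,b_j\}$ into $B(D)$.

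The main work is to show: two intervals $I_j, I_k$ lie in the same connected component of $G$ if and only if the four vertices $a_j, b_j, a_k, b_k$ lie in the same connected component of $B(D)$. For the forward direction, it suffices to treat a single edge $\{I_j, I_k\}$ of $G$ with $a_j < a_k$; then in $B(D)$ the vertices $a_j, b_j$ are joined (the edge $\{a_j,b_j\}$), the vertices $a_k, b_k$ are joined, and — in the non-nested case — the vertices $a_k, b_j$ are joined, so all four endpoints lie in one component of $B(D)$. For the reverse direction, every edge of $B(D)$ is either a ``diagram edge'' $\{a_j,b_j\}$ (which keeps $a_j,b_j$ in the component containing $I_j$) or an ``overlap edge'' $\{a_k, b_j\}$ with $a_j < a_k < b_j < b_k$, and in the latter case $I_j \cap I_k \ni (a_k, b_j)$ so $\{I_j,I_k\}$ is an edge of $G$; thus a path in $B(D)$ translates into a path in $G$. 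Since $B(D)$ has no isolated vertices (each endpoint lies on its diagram edge) and $G$ has no isolated... well, $G$ may have isolated vertices corresponding to intervals meeting no other, but those correspond exactly to edges $\{a_j,b_j\}$ forming a component of $B(D)$ by themselves, so the component counts match. Hence $G$ is connected iff $B(D)$ is connected.

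The step I expect to be the main obstacle — and the only genuinely delicate point — is the handling of \emph{nested intervals}: if $a_j < a_k < b_k < b_j$, then $I_j$ and $I_k$ intersect (so $\{I_j,I_k\} \in G$), but the pair $(a_k,b_j)$ does \emph{not} satisfy $a_j < a_k < b_j < b_k$, so the edge $\{a_k,b_j\}$ is \emph{not} in $B(D)$. I would handle this by observing that nesting is really about transitivity of the overlap relation: if $I_k \subset I_j$, then any interval $I_\ell$ meeting $I_k$ also meets $I_j$ unless $I_\ell$ is itself nested inside $I_j$ in a way that... — more carefully, I would argue that $a_k$ and $b_k$ are each connected in $B(D)$ to $\{a_j, b_j\}$ by a two-step path: since $a_j < a_k < b_j$, I want $b_j$ adjacent to $a_k$, which fails only because $b_j > b_k$; but then consider whether some endpoint lies in $(a_k, b_k) \subset (a_j,b_j)$. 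If $D = \{(a_j,b_j),(a_k,b_k)\}$ with strict nesting and nothing else, then $B(D)$ has only the two diagram edges and is \emph{disconnected}, while $G$ is a single edge and is \emph{connected} — so the lemma as literally stated would be \emph{false}. I would therefore reexamine Definition~\ref{def:bipartite}: presumably the intended edge condition is ``$a_j < a_k < b_j$'' (i.e.\ intervals $j,k$ overlap with $a_j < a_k$), regardless of the order of $b_j$ and $b_k$, which makes nesting a non-issue and the proof above go through verbatim. Assuming that reading of the definition, the argument is complete; the honest thing to flag is that the condition ``$a_j < a_k < b_j < b_k$'' should read ``$a_j < a_k < b_j$'' for the lemma to hold, or else the notion of ``connected'' must be the one induced by $B(D)$ rather than by the bar-code graph $G$.
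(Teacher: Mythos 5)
Your underlying correspondence (diagram edges of $B(D)$ $\leftrightarrow$ intervals, overlap edges $\leftrightarrow$ intersecting pairs) is essentially the paper's own argument, which packages it as a quotient: identifying $a \sim b$ for each $(a,b) \in D$ collapses every diagram edge, and the paper asserts that $B(D)/{\sim}$ is isomorphic to the bar-code graph of Definition~\ref{def:graph}, so that connectivity passes back and forth. The real content of your write-up is the scrutiny of nested pairs, and you are right: if $a_j < a_k < b_k < b_j$ the intervals intersect, so the bar-code graph has the edge $\{I_j,I_k\}$, but Definition~\ref{def:bipartite} demands $b_j < b_k$, so $B(D)$ acquires no corresponding edge. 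Your two-point nested example (say $D = \{(0,10),(2,3)\}$, which is generic) makes the bar-code graph a single edge while $B(D)$ is two disjoint edges, so the lemma as literally stated fails; it also shows that the isomorphism claimed in the paper's proof fails in the presence of nesting, since $B(D)/{\sim}$ is then only a proper spanning subgraph of the bar-code graph. What survives unconditionally is one direction: $B(D)$ connected $\Rightarrow$ $D$ connected.

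One caveat about your proposed repair. Of your two alternatives, weakening the edge condition in Definition~\ref{def:bipartite} to $a_j < a_k < b_j$ is not compatible with how $B(D)$ is used in Section~\ref{sec:average}: there the bipartite graph must be recovered from the critical points of the (average) landscape, and a nested pair contributes no intersection triple, because the tent function of the inner interval lies strictly below that of the outer one wherever it is positive --- two tents cross with slopes $-1$ and $+1$ exactly when $a_j < a_k < b_j < b_k$. So the fix consistent with Theorem~\ref{thm:reconstruct} is your second alternative: interpret ``connected'' via the graph whose edges are the properly overlapping (staggered) pairs, equivalently connectivity of $B(D)$ itself, which is the hypothesis the reconstruction argument actually uses. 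With that reading, your component-by-component argument (and the paper's quotient argument) goes through verbatim.
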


\begin{proof}
  Let $D$ be a generic persistence diagram. If we set $a \sim b$ in $(a,b) \in D$, the $B(D)/\sim$ is isomorphic to the graph of the bar code corresponding to $D$.
By definition, $B(D)$ is connected, if and only if $B(D)/\sim$ is connected.
\end{proof}


\subsection{Critical points of persistence landscapes}
\label{sec:critical-points}

We observe that it is easy to list the critical points of a persistence landscape from its corresponding persistence diagram.

\begin{lemma}
Let $D = \{(a_j,b_j)\}$ be a persistence diagram.
Consider the intervals $[a_j,b_j)$ in the corresponding bar code.
The critical points in the corresponding persistence landscape consist of
\begin{enumerate}
\item \label{it:1} the left end points $a_j$ of the intervals;
\item \label{it:2} the right end points $b_j$ of the intervals;
\item \label{it:3} the midpoints $\frac{a_j+b_j}{2}$ of the intervals; and
\item \label{it:4} the midpoints $\frac{a_k+b_j}{2}$ of intersections of pairs of intervals where $a_j < a_k < b_j < b_k$.
\end{enumerate}
Let $C(D)$ denote this set.
\end{lemma}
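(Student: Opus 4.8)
The plan is to prove the two inclusions separately: first, that every critical point of every landscape function $\lambda_k$ belongs to the set listed in (1)--(4); and second, that conversely each listed point is a critical point of some $\lambda_k$ (at least when $D$ is generic). Write $f_j := f_{(a_j,b_j)}$, and recall that $f_j$ is supported on $[a_j,b_j]$, with slope $+1$ on $(a_j,\tfrac{a_j+b_j}{2})$ and slope $-1$ on $(\tfrac{a_j+b_j}{2},b_j)$, so that the breakpoints of the individual tent functions $f_j$ are exactly the points of types (1)--(3); call their (finite) union $Q$. On each connected component $(\alpha,\beta)$ of $\R \setminus Q$ every $f_j$ restricts to an affine function $t \mapsto m_j t + c_j$ with $m_j \in \{-1,0,1\}$, and $\lambda_k = \kmax_j f_j$; so the question on $(\alpha,\beta)$ reduces to locating the slope changes of the $k$th largest of finitely many affine functions. (For $k$ larger than the number of points of $D$ we have $\lambda_k \equiv 0$ and there is nothing to do.)

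The key step is a classification of these slope changes. A slope change of $\lambda_k$ at $t^\ast \in (\alpha,\beta)$ can only occur at a point where two of the affine pieces, $f_i$ and $f_j$, satisfy $f_i(t^\ast) = f_j(t^\ast) = \lambda_k(t^\ast)$ and $m_i \neq m_j$: on a small one-sided neighborhood of $t^\ast$ (to the left, resp.\ right) $\lambda_k$ agrees with a single $f_p$, resp.\ $f_q$ (the ordering of finitely many affine functions being locally constant away from their crossings), continuity of $\kmax$ forces $f_p(t^\ast) = f_q(t^\ast)$, and if $m_p = m_q$ then $f_p$ and $f_q$ coincide on all of $(\alpha,\beta)$ and no slope change results. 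It then remains to \emph{classify such ties}. A slope-$0$ tent function is identically zero on $(\alpha,\beta)$, so a tie between a slope-$0$ piece and a slope-$(\pm 1)$ piece would force the latter to vanish at $t^\ast$, i.e.\ $t^\ast \in \{a_i,b_i\}\subseteq Q$, which is impossible; hence the tie is between a rising piece $f_i(t) = t - a_i$ and a falling piece $f_j(t) = b_j - t$, whence $t^\ast = \tfrac{a_i+b_j}{2}$. Demanding that $t^\ast$ lie strictly inside the rising branch of $f_i$ and strictly inside the falling branch of $f_j$ is exactly the condition $a_j < a_i < b_j < b_i$, i.e.\ (4) with its $(j,k)$ being our $(j,i)$, so $t^\ast$ is a type-(4) point. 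Uniting over components and over $k$ gives that every critical point of $\lambda$ lies in $C(D)$.

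For the reverse inclusion one argues locally at each candidate point $t_0$: for a type-(1)--(3) point, take the $f_j$ whose breakpoint is $t_0$ and let $r$ be the rank $f_j$ attains throughout a punctured neighborhood of $t_0$; for a type-(4) point $\tfrac{a_k+b_j}{2}$, use the transversal crossing of $f_j$ and $f_k$ there. In each case $\lambda_r$ inherits the slope change, and when $D$ is generic (Definition~\ref{def:generic-pd}) there are no coincidental ties, so this step is routine; together with the first inclusion this identifies the set of critical points with $C(D)$. I expect the crux to be the classification in the second paragraph --- in particular, the assertion that $\lambda_k$ can change slope inside $(\alpha,\beta)$ only at a pairwise crossing of opposite-slope pieces, and pushing this through cleanly when several $f_j$ coincide at a point or tie in value. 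The localization and the converse are, by comparison, bookkeeping.
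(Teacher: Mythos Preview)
Your proposal is correct and follows the same idea as the paper: the critical points of $\lambda$ are exactly the breakpoints of the individual tent functions $f_{(a_j,b_j)}$ (giving types (1)--(3)) together with the points where a descending tent meets an ascending tent, i.e.\ where $f_{(a_j,b_j)}$ has slope $-1$ and $f_{(a_k,b_k)}$ has slope $+1$ at a common value (giving type (4)). The paper's proof is far terser---it simply \emph{recalls} this two-part characterization as known and matches each part to the list---whereas you supply the self-contained argument via the decomposition by $Q$ and the classification of affine crossings; your caution about genericity for the reverse inclusion is reasonable but not something the paper addresses.
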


\begin{proof}
  Recall that the critical points of the persistence landscape of $D = \{(a_j,b_j)\}_{j \in J}$ consist of the critical points of the functions $f_{(a_j,b_j)}$ and the points $t$ for which there exist $j$ and $k$ such that 
    $f_{(a_j,b_j)}(t) = f_{(a_k,b_k)}(t)$,
    $f'_{(a_j,b_j)}(t) = -1$ and $f'_{(a_k,b_k)}(t)=1$.
    The former are exactly the points in \eqref{it:1}, \eqref{it:2}, and \eqref{it:3}.
    The latter are exactly the points in \eqref{it:4}.
\end{proof}

In the set $C(D)$ we have the following three-term arithmetic progressions,
\begin{equation*}
  a_j,\tfrac{a_j+b_j}{2},b_j \quad \text{and} \quad a_k,\tfrac{a_k+b_j}{2},b_j,
\end{equation*}
which we call \emph{interval triples} and \emph{intersection triples}, respectively.
Note that we have one interval triple for each point in the persistence diagram and one intersection triple for each pair of points in the persistence diagram that satisfies $a_j < a_k < b_j < b_k$.

\subsection{Arithmetically independent sets of persistence diagrams}

In this section we introduce assumptions for a set $\{D_1,\ldots,D_n\}$ of persistence diagrams.

\begin{definition} \label{def:arithmetically-independent}
  Let $\{D_1,\ldots,D_n\}$ be a set of persistence diagrams.
  We call this set \emph{arithmetically independent} if it satisfies the following assumptions.
  \begin{enumerate}
  \item Each $D_i$ is generic. 
  \item The sets $C(D_i)$ are pairwise disjoint.
  \item Let $C$ be the set of all critical points in $\bar{\lambda}(D_1,\ldots,D_n)$. All of the three-term arithmetic progressions in $C$ are either interval triples or intersection triples of some $D_i$.
  \end{enumerate}
\end{definition}

\begin{example}
  The set $\{D\}$, where $D = \{(0,1),(1,2)\}$, is not arithmetically independent since $1$ appears twice as an endpoint of an interval in $D$.
  The set $\{D_1,D_2\}$, where $D_1 = \{(0,2)\}$ and $D_2 = \{(1,5)\}$, is not arithmetically independent since $1$ is a midpoint of an interval in $D_1$ and an endpoint of an interval in $D_2$.
  The set $\{D_1,D_2\}$, where $D_1 = \{(0,1)\}$ and $D_2 = \{(2,4)\}$ is not arithmetically independent because of the three-term arithmetic progression $(0,1,2)$.
  The set $\{D_1,D_2\}$, where $D_1 = \{(0,8)\}$ and $D_2 = \{(11,13)\}$ is not arithmetically independent because of the three-term arithmetic progression $(4,8,12)$.
  However, if we add $0$, $0.1$, $0.01$, and $0.001$ to the four respective numbers in each of these examples, then they become arithmetically independent.
\end{example}

\subsection{Reconstruction of persistence diagrams from an average landscape} \label{sec:reconstruct}

We are now in a position to state and prove our reconstruction result.

\begin{theorem} \label{thm:reconstruct}
  Let $\bar{\lambda}$ be the average landscape of the persistence diagrams $D_1,\ldots,D_n$.
  If $D_1,\ldots,D_n$ are connected and arithmetically independent then one can
  reconstruct $\{D_1,\ldots,D_n\}$ from $\bar{\lambda}$.
\end{theorem}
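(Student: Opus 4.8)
The plan is to reconstruct the individual diagrams by disentangling the average landscape into its constituent pieces using the arithmetic structure of critical points. The key observation is that an average landscape $\bar\lambda = \frac1n\sum_i \lambda(D_i)$, while a single function, carries traces of each $\lambda(D_i)$ through its critical-point set $C$, which by construction is the disjoint union $\bigcup_i C(D_i)$. The strategy has two layers: first, partition $C$ into the sets $C(D_i)$ using the arithmetic-independence hypothesis; second, within each $C(D_i)$, recover the persistence diagram $D_i$ via its bipartite graph, invoking Proposition~\ref{prop:reconstruct}.

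First I would extract $C$ from $\bar\lambda$: the critical points of $\bar\lambda$ are read off directly as the points where its piecewise-linear graph changes slope, and since the $C(D_i)$ are pairwise disjoint, no cancellation can merge or delete a critical point, so $C = \bigsqcup_i C(D_i)$ as sets. Next I would use condition~(3) of arithmetic independence: every three-term arithmetic progression inside $C$ is either an interval triple $a_j, \frac{a_j+b_j}{2}, b_j$ or an intersection triple $a_k, \frac{a_k+b_j}{2}, b_j$ of some single $D_i$. This lets me enumerate all such triples from $C$ alone and know that each one "belongs" to one diagram. The triples give me the candidate edges and vertices for each diagram's bipartite graph $B(D_i)$ — the interval triples supply the edges $(a_j,b_j)$, the intersection triples supply the edges $(a_k,b_j)$ with $a_j < a_k < b_j < b_k$ — but I still need to know which triples group together into the same diagram.

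This grouping is where connectedness enters. Since each $D_i$ is connected, its bipartite graph $B(D_i)$ is connected (by the Lemma relating connectedness of a generic diagram to connectedness of its bipartite graph), and conversely $B(D_i)$ is precisely one connected component of the graph assembled from all the triples in $C$: two triples lie in the same $D_i$ exactly when the graph they help span is connected. So I would build the graph on vertex set $C$ (or rather on the endpoint-type elements appearing in triples) with edges coming from all interval and intersection triples, take its connected components, and declare each component to be some $B(D_i)$. Connectedness of each $D_i$ guarantees a component is not split; arithmetic independence (disjointness of the $C(D_i)$ and the triple condition) guarantees two different diagrams do not accidentally get merged into one component, since an edge between them would require a shared endpoint or a spurious arithmetic progression straddling two diagrams. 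Finally, for each component I apply Proposition~\ref{prop:reconstruct} to recover $D_i$ from its bipartite graph, and the unordered set $\{D_1,\ldots,D_n\}$ is thereby determined.

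The main obstacle I anticipate is the grouping step — rigorously proving that the connected components of the triple-graph on $C$ are exactly the $B(D_i)$, with no merging and no splitting. Splitting is ruled out cleanly by the connectedness hypothesis on each $D_i$. The subtle direction is no merging: I must show that no interval or intersection triple in $C$ has its three terms drawn from the critical points of more than one diagram, and more generally that the edge relation never links $C(D_i)$ to $C(D_j)$ for $i \neq j$. Here condition~(3) of arithmetic independence does the work — any progression that mixed diagrams would be a three-term arithmetic progression in $C$ that is not an interval or intersection triple of a single $D_i$, contradicting the hypothesis — but I would need to be careful that the definitions of interval/intersection triple are stated for a fixed diagram and that the bipartite-graph edge set is recoverable purely from the triple data, not from extra knowledge of which points came from which $D_i$. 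Once that bookkeeping is pinned down, the rest follows from the already-established propositions.
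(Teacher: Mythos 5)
Your proposal is correct and follows essentially the same route as the paper's proof: extract the critical set $C$ of $\bar\lambda$, form the (bipartite) graph whose edges come from three-term arithmetic progressions in $C$, use arithmetic independence to rule out edges between different diagrams and connectedness to keep each $B(D_i)$ in one component, and then apply Proposition~\ref{prop:reconstruct} componentwise. Your explicit remark that disjointness of the $C(D_i)$ prevents cancellation of critical points in the average is a point the paper leaves implicit, but the argument is the same.
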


\begin{proof}
  Let $C$ be the set of all critical points in the average landscape $\bar{\lambda}(D_1,\ldots,D_n)$.
  Let $U \subset C$ be the subset of critical points that are the first term in a three-term arithmetic progression in $C$.
  Let $V \subset C$ be the subset of critical points that are the third term in a three-term arithmetic progression in $C$.

  By assumption $U$ and $V$ are disjoint.
  Let $B$ be the bipartite graph whose set of vertices is the disjoint union of $U$ and $V$ and whose edges consist of $\{a,b\}$ where $a$ and $b$ are the first and third term of a three-term arithmetic progression in $C$.

  By the assumption of arithmetic independence, vertices in $B$ are only connected by an edge if they are critical points of the same persistence diagram.
  By the assumption of connectedness, all of the critical points of a persistence landscape of one of the persistence diagrams are connected in $B$.
  Thus, the connected components of $B$ are exactly the bipartite graphs $B(D_1),\ldots,B(D_n)$.
  
  Using Proposition~\ref{prop:reconstruct}, we can reconstruct each persistence diagram from the corresponding bipartite graph.
\end{proof}

\subsection{Persistence landscapes are characteristic for empirical measures} \label{sec:finitely-characteristic}

We can restate Theorem~\ref{thm:reconstruct} using the language of characteristic kernels (Section~\ref{sec:kernel}).

\begin{theorem} \label{thm:characteristic}
  The persistence landscape kernel is characteristic for empirical measures on connected and arithmetically independent persistence diagrams.
\end{theorem}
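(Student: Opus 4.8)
The plan is to translate Theorem~\ref{thm:reconstruct} into the language of kernel mean embeddings introduced in Section~\ref{sec:kernel}. Recall that an empirical measure on the set of persistence diagrams is a measure of the form $\mu = \frac{1}{n}\sum_{i=1}^n \delta_{D_i}$ for persistence diagrams $D_1,\ldots,D_n$; here I would restrict attention to those empirical measures all of whose atoms $D_1,\ldots,D_n$ form a connected and arithmetically independent set. Let $\mathcal{M}$ denote the collection of all such empirical measures. The persistence landscape kernel $K$ from \eqref{eq:pl-kernel} is the kernel associated to the feature map $D \mapsto \lambda(D) \in L^2(\N\times\R)$, so its kernel mean embedding sends $\mu = \frac{1}{n}\sum_i \delta_{D_i}$ to $\Phi_K(\mu) = \frac{1}{n}\sum_i \lambda(D_i) = \bar{\lambda}(D_1,\ldots,D_n)$, which is exactly the average persistence landscape. (One should note in passing that $\Phi_K(\mu)$ is well-defined: each $\lambda(D_i)$ is a compactly supported bounded function, hence lies in $L^2(\N\times\R)$, and a finite average of such functions does too.)

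With this identification in hand, the claim reduces to showing that $\Phi_K$ is injective on $\mathcal{M}$. First I would observe that knowing $\Phi_K(\mu)$ is the same as knowing the average landscape $\bar{\lambda}$ together with the normalization $n$ — and in fact $n$ is recoverable from $\bar{\lambda}$ alone once we know the $D_i$ are connected and arithmetically independent, but more simply one can read off $n$ from the measure being compared. Then Theorem~\ref{thm:reconstruct} applies verbatim: from $\bar{\lambda}$ one reconstructs the set $\{D_1,\ldots,D_n\}$ of atoms, and since each atom carries equal mass $1/n$ in an empirical measure, this determines $\mu$ completely. Hence if $\mu,\mu' \in \mathcal{M}$ satisfy $\Phi_K(\mu) = \Phi_K(\mu')$, then they have the same average landscape, hence (by Theorem~\ref{thm:reconstruct}) the same multiset of atoms, hence $\mu = \mu'$; so $\Phi_K$ is injective on $\mathcal{M}$ and $K$ is characteristic over $\mathcal{M}$.

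The one genuinely delicate point — and the step I expect to require the most care — is the bookkeeping around the parameter $n$ and the multiset-versus-set distinction. Theorem~\ref{thm:reconstruct} is stated for a fixed $n$ and reconstructs the \emph{set} $\{D_1,\ldots,D_n\}$; arithmetic independence forces the $C(D_i)$ to be pairwise disjoint, so in particular the $D_i$ are pairwise distinct and there is no difference between the set and the multiset of atoms, and the mass on each atom is forced to be $1/n$. But to conclude injectivity across measures with possibly different numbers of atoms, I would argue that two empirical measures $\frac{1}{n}\sum_i\delta_{D_i}$ and $\frac{1}{m}\sum_j\delta_{D'_j}$ with the same embedding must have $n = m$: indeed both embeddings equal a common average landscape $\bar\lambda$, and applying the reconstruction procedure of Theorem~\ref{thm:reconstruct} to $\bar\lambda$ yields a well-defined set of diagrams together with a well-defined count, since the connected components of the bipartite graph $B$ built from the critical points of $\bar\lambda$ are intrinsic to $\bar\lambda$. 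This forces the two atom sets to coincide and $n=m$, completing the argument. I would state this cleanly as a short proof and cite Theorem~\ref{thm:reconstruct} for the reconstruction itself.
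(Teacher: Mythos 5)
Your proposal is correct and matches the paper's intent: the paper presents Theorem~\ref{thm:characteristic} as a direct restatement of Theorem~\ref{thm:reconstruct}, implicitly using exactly your identification of the kernel mean embedding of an empirical measure $\frac{1}{n}\sum_i \delta_{D_i}$ with the average landscape $\bar{\lambda}(D_1,\ldots,D_n)$, so injectivity on this class of measures is precisely the reconstruction statement. Your extra bookkeeping (distinctness of atoms from disjointness of the $C(D_i)$, and recovering $n$ because the reconstruction procedure is intrinsic to $\bar{\lambda}$) is a careful spelling-out of details the paper leaves implicit, not a different route.
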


\subsection{Genericity of arithmetically independent persistence diagrams}
\label{sec:generic}

We end this section by showing that connected and arithmetically independent persistence diagrams are generic in a particular sense.

\begin{lemma} \label{lem:connected}
  Let $D = \{(a_j,b_j)\}_{j=1}^n$ be a persistence diagram. Let $\eps>0$.
  Then there exists a connected persistence diagram $D'$ with $d_B(D,D') < \eps$.
\end{lemma}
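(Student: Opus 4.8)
The plan is to perturb the persistence diagram so that its barcode becomes connected while moving each point by less than $\eps$ in the sup-norm. First I would recall that $d_B(D,D')$ is bounded above by the sup-norm distance between the coordinate vectors when $D$ and $D'$ have the same number of points (this is exactly the matching that pairs $(a_j,b_j)$ with its perturbed copy, and it is the content of the corollary to Theorem~\ref{thm:stability-bottleneck}). So it suffices to produce $D'$ with the same number of points, with each point moved by at most some $\delta < \eps$, whose barcode graph (Definition~\ref{def:graph}) is connected.

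The construction I have in mind is to sort the left endpoints and ``chain'' the intervals together. List the points of $D$ as $(a_1,b_1),\ldots,(a_n,b_n)$ with $a_1 \le a_2 \le \cdots \le a_n$. Let $M$ be larger than every $b_j$ and let $m$ be smaller than every $a_j$, so all intervals lie in $(m,M)$. For the perturbation, I would leave the first interval essentially alone and successively lengthen intervals to the right (decreasing their left endpoints, or increasing their right endpoints) so that consecutive intervals in the sorted order overlap; concretely, replace $(a_j,b_j)$ by $(a_j - \eps_j, b_j)$ for a carefully chosen decreasing sequence of positive $\eps_j < \delta$, or if that is not enough because some $b_j$ is itself very small, also nudge the right endpoints up by a small amount. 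The key point is that one only ever needs to close gaps, and each gap $a_{j+1} - b_j$ that one must close can be bridged by arbitrarily small moves provided the intervals are already ordered — but this is false in general, since a gap can be large. So the honest construction is different: rather than making consecutive sorted intervals overlap, I would introduce the overlaps only where gaps are already small, and handle genuinely large gaps by observing that a large gap in the union of intervals corresponds to a parameter range where the relevant Betti number is zero (as noted in Section~\ref{sec:noninvertibility}) — but connectedness of the \emph{graph} does require bridging every such gap.

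The cleanest route, then, is: pick $\delta < \eps$, and for each $j$ replace $b_j$ by $b_j' = b_j + (j-1)\delta/n$ and $a_j$ by $a_j' = a_j - (n-j)\delta/n$ — no, the genuinely robust move is to add a single auxiliary observation: since we are allowed to move points but must keep the same cardinality, I would instead argue by induction on the number of connected components of the barcode graph. If there are at least two components, choose the component $\mathcal{C}$ whose intervals have the largest right endpoint strictly less than $M$ among all but one component, pick an interval $I = [a,b)$ in $\mathcal{C}$ realizing a boundary of the gap to the next component $\mathcal{C}'$, and push its right endpoint $b$ up (or push the left endpoint of the leftmost interval of $\mathcal{C}'$ down) by the full budget; repeat. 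The subtlety is that the total budget available to any single endpoint is $\eps$, and a diagram could have a gap wider than $\eps$, so a single push cannot close it. To fix this one pushes \emph{both} endpoints bordering the gap toward each other, and if the gap still exceeds $2\eps$ one is genuinely stuck — unless one allows all the intervals of $\mathcal{C}$ to drift: shift the entire component $\mathcal{C}'$ (and everything to its right) leftward by $\eps$, shift $\mathcal{C}$ (and everything to its left) is not needed; iterate, each iteration closing one gap by $\eps$ and hence decreasing the number of components in at most $\lceil (\text{width})/\eps \rceil$... but these shifts accumulate beyond $\eps$.

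The main obstacle, therefore, is precisely this budget bookkeeping: naive gap-closing moves accumulate and violate $d_B(D,D')<\eps$. The resolution I would pursue is to not insist the perturbed diagram be close to $D$ point-by-point with a uniform matching, but rather to use the flexibility in $d_B$ directly: spread each interval's endpoints by at most $\eps/2$ in a way that depends on sorted position, specifically replace $(a_j,b_j)$ sorted by left endpoint with $(a_j, \max(b_j, a_{j+1} + \eps/2))$ processed from right to left, so that each interval is only ever extended, each extension is bounded because—and here is where genericity of the statement helps—the lemma only claims existence of \emph{some} connected $D'$ within $\eps$, and one may first discard the constraint by noting that if some gap exceeds $\eps$ one simply cannot connect across it without a large move, meaning the lemma as stated would be false; hence the intended reading must be that one connects only where possible and the ``connected'' diagram is obtained after verifying no gap is forced to be large—which holds because one is free to choose $D'$ to have all its mass concentrated. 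In the write-up I would therefore present the straightforward version: perturb $(a_j,b_j) \mapsto (a_j', b_j')$ where the perturbation is engineered so consecutive intervals in the left-endpoint order overlap, bound each coordinate move by $\eps$ using that one processes greedily and each individual interval need only reach its immediate successor, and if that bound fails, invoke rescaling. The delicate step — and the one I expect the author's proof to handle with an explicit inductive or greedy construction plus a clean inequality — is showing the total displacement of any single endpoint stays below $\eps$ despite the cascade of overlaps.
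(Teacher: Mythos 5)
Your approach has a genuine gap, and it is one you yourself ran into repeatedly: you insist on producing $D'$ by perturbing the existing points of $D$ (same cardinality, each point moved by less than $\eps$), and then correctly observe that a gap in the barcode wider than $\eps$ cannot be bridged this way, at which point the argument dissolves into several abandoned constructions and even the (false) suggestion that "the lemma as stated would be false." The missing idea is that $D'$ is not required to have the same cardinality as $D$, and the bottleneck distance lets you \emph{add} points close to the diagonal at negligible cost, since an added point $(b,d)$ can be matched to the diagonal at cost $\tfrac{d-b}{2}$. So rather than moving anything, one simply adjoins a connected chain of short intervals spanning the whole diagram.

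Concretely, the paper's proof sets $a = \min_j a_j$, $b = \max_j b_j$, chooses $N$ with $\frac{b-a}{N} < \frac{\eps}{2}$, and takes
\begin{equation*}
D'' = \left\{ \left(a + (k-1)\tfrac{b-a}{N},\; a + (k+1)\tfrac{b-a}{N}\right) \right\}_{k=0}^{N}.
\end{equation*}
Consecutive intervals of $D''$ overlap, their union covers $[a,b]$, and hence every interval of $D$ meets some interval of $D''$, so $D \amalg D''$ is connected. Each interval of $D''$ has length $\frac{2(b-a)}{N} < \eps$, so $d_B(D'',\emptyset) < \eps$, and matching the points of $D$ to themselves and the points of $D''$ to the diagonal gives $d_B(D, D \amalg D'') < \eps$. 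This sidesteps entirely the "budget bookkeeping" obstruction that sank your constructions: no existing endpoint moves at all, and arbitrarily wide gaps are bridged by arbitrarily many cheap points. (Your one reusable observation --- that sup-norm perturbation of coordinates bounds $d_B$ --- is true but is simply not needed here.)
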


\begin{proof}
  Let $a = \min\{a_j\}$ and $b = \max\{b_j\}$.
  Choose $N$ such that $\frac{b-a}{N} < \frac{\eps}{2}$.
  Let $D'' = \{(a+ (k-1)\frac{b-a}{N}, a + (k+1)\frac{b-a}{N})\}_{k=0}^N$.
  Then $D''$ is connected and $d_B(D'',\emptyset) < \eps$.
  Thus $D \amalg D''$ is connected and $d_B(D,D \amalg D'')< \eps$.
\end{proof}

\begin{lemma} \label{lem:generic}
  Let $D = \{(a_j,b_j)\}_{j=1}^n$. Let $\eps>0$.
  Then there is a generic persistence diagram $D' = \{(a'_j,b'_j)\}_{j=1}^n$ with $d_B(D,D') < \eps$.
Furthermore, if $D$ is connected then so is $D'$.
\end{lemma}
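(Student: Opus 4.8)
The plan is to obtain $D'$ from $D$ by an arbitrarily small perturbation of the endpoints, chosen simultaneously small enough to preserve every overlap present in the bar code of $D$ and generic enough to separate all endpoints. These two requirements are compatible because the first only asks the perturbed endpoint vector to stay in some open neighborhood of the original one, while the second is met on a dense subset of $\R^{2n}$.

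First I would reduce genericity to a separation statement: a diagram $\{(a'_j,b'_j)\}_{j=1}^n$ is generic in the sense of Definition~\ref{def:generic-pd} as soon as the $2n$ numbers $a'_1,\ldots,a'_n,b'_1,\ldots,b'_n$ are pairwise distinct, since then for every $j\neq k$ the four numbers $a'_j,b'_j,a'_k,b'_k$ are automatically distinct. The set $G\subset\R^{2n}$ on which all $2n$ coordinates are pairwise distinct is the complement of finitely many hyperplanes, hence open and dense. Moreover, if the endpoint vector of $D'$ lies within $\eps$ of that of $D$ in the sup-norm, then the identity matching already gives $d_B(D,D')<\eps$. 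So it suffices to find a point of $G$, close to the original endpoint vector, that corresponds to a legitimate and (when required) connected diagram.

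Next I would isolate the relevant open neighborhood. The inequalities $a'_j<b'_j$ for $j=1,\ldots,n$ are finitely many strict inequalities between continuous coordinate functions, hence cut out an open set $\mathcal{V}$ containing the endpoint vector of $D$ and consisting of honest persistence diagrams. If in addition $D$ is connected, then the graph of its bar code $\{[a_j,b_j)\}_{j=1}^n$ (Definition~\ref{def:graph}) is connected, and each of its finitely many edges $\{j,k\}$ is recorded by the strict inequality $\max(a_j,a_k)<\min(b_j,b_k)$; hence there is an open set $\mathcal{W}$, again containing the endpoint vector of $D$, on which all of these edges persist. On $\mathcal{V}\cap\mathcal{W}$ the perturbed diagram is legitimate and its bar-code graph contains that of $D$ on the same vertex set, so it is connected. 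Finally, the set $\mathcal{V}\cap\mathcal{W}\cap\{\text{endpoint vectors within }\eps\text{ of }D\}$ is a nonempty open set, so by density it contains a point of $G$; taking $D'$ to be the corresponding diagram finishes the proof (when $D$ is not assumed connected, one simply omits $\mathcal{W}$). The only point needing care is the simultaneity of the two requirements—that a finite list of strict inequalities is stable under small perturbations while distinctness of $2n$ reals fails only on a nowhere dense set—but this presents no real difficulty, so I do not anticipate a genuine obstacle beyond bookkeeping.
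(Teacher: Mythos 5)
Your proof is correct, but it follows a genuinely different route from the paper. The paper argues by induction on the number of points: at each step it perturbs the new point one-sidedly, choosing $a'_n \in [a_n-\frac{\eps}{2},a_n]$ and $b'_n \in [b_n,b_n+\frac{\eps}{2}]$ while avoiding the finitely many values that would violate genericity; because every interval is only enlarged ($a'_j \leq a_j < b_j \leq b'_j$), every intersection in the original bar code survives, so connectedness is automatic. You instead work globally in $\R^{2n}$: genericity is the complement of finitely many hyperplanes, hence dense, while validity ($a'_j<b'_j$), the $\eps$-ball, and the persistence of each original edge are finitely many strict inequalities, hence open; a generic point in the nonempty open intersection gives $D'$. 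Your approach is cleaner (no induction, and it makes explicit that genericity is an open dense condition in endpoint space, which is close in spirit to Proposition~\ref{prop:generic}), but note that it leans on the half-open convention $[a_j,b_j)$ used in the paper, under which $I_j\cap I_k\neq\emptyset$ is equivalent to the strict inequality $\max(a_j,a_k)<\min(b_j,b_k)$ and is therefore stable under small perturbations. If the bars were read as closed intervals, two bars meeting only at a single point would give an edge defined by a non-strict inequality, and your open set $\mathcal{W}$ would not capture it; the paper's one-sided, interval-enlarging perturbation is insensitive to this distinction. Under the paper's conventions this is only a remark, not a gap.
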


\begin{proof}
  The proof is by induction on $n$.
  If $n=0$ then the statement is trivial.
  Assume that $\{(a'_j,b'_j)\}_{j=1}^{n-1}$ is a generic persistence diagram and $d_B(\{(a_j,b_j)\}_{j=1}^{n-1},\{(a'_j,b'_j)\}_{j=1}^{n-1}) < \eps$.
  Since there are only finitely many numbers to avoid, we can choose
  $a'_n \in [a_n-\frac{\eps}{2},a_n]$ and $b'_n \in [b_n,b_n+\frac{\eps}{2}]$ such that $D' := \{(a'_j,b'_j)\}_{j=1}^n$ is a generic persistence diagram.
  Note that $d_B(D,D') < \eps$.
  Since $a'_n \leq a_n < b_n \leq b'_n$, if $D$ is connected then so is $D'$.
\end{proof}

\begin{proposition} \label{prop:generic}
  Let $D$ be a generic persistence diagram.
  Then there is an $\eps>0$ such that for all $D'$
  with $\abs{D'}=\abs{D}$ and $d_B(D,D')< \eps$, $D'$ is generic and $B(D') \isom B(D)$.
\end{proposition}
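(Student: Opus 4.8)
The plan is to show that genericity is an open condition: since $D$ is generic, its finitely many critical-point coordinates (the endpoints $a_j, b_j$, the midpoints $\tfrac{a_j+b_j}{2}$, and the intersection midpoints $\tfrac{a_k+b_j}{2}$) satisfy finitely many strict inequalities, and small perturbations preserve them. First I would let $D = \{(a_j,b_j)\}_{j=1}^m$ and collect the finite list of ``separations'' that genericity guarantees: for every pair $j \neq k$, the four numbers $a_j, b_j, a_k, b_k$ are distinct, so the minimum pairwise gap among all these coordinates is some $\delta > 0$. Set $\eps = \delta/3$ (or any small fraction of $\delta$). Given any $D' = \{(a'_j,b'_j)\}_{j=1}^m$ with $d_B(D,D') < \eps$, there is a bijective matching of the points of $D'$ to those of $D$ moving each coordinate by less than $\eps$; since $2\eps < \delta$, no two coordinates that were distinct in $D$ can collide in $D'$, and in particular $a'_j < b'_j$ is preserved, so $D'$ is again a generic persistence diagram (after relabeling by the matching). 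This handles the first conclusion.

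For the isomorphism $B(D') \isom B(D)$, I would argue that the combinatorial data defining the bipartite graph is also stable. Recall from Definition~\ref{def:bipartite} that the edges of $B(D)$ are the ``vertical'' edges $(a_j,b_j)$ together with the ``crossing'' edges $(a_k,b_j)$ for pairs with $a_j < a_k < b_j < b_k$. The vertical edges are preserved automatically by the matching. For the crossing edges, note that the chain of strict inequalities $a_j < a_k < b_j < b_k$ involves only differences of coordinates of $D$, each of which has absolute value at least $\delta$ by genericity; since each coordinate moves by less than $\eps < \delta/2$, every such inequality (and every failed inequality, i.e.\ every non-edge) is preserved in $D'$. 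Hence the matching of points induces a graph isomorphism $B(D) \to B(D')$ that carries $U$ to $U'$ and $V$ to $V'$.

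The main obstacle — really the only subtlety — is making the bottleneck matching cooperate: a priori $d_B(D,D') < \eps$ gives a matching that might pair some points of $D$ to the diagonal rather than to points of $D'$. This is why the hypothesis $\abs{D'} = \abs{D}$ is needed: I would use it to argue that when $\eps$ is smaller than, say, one third of the minimum of $\delta$ and $\min_j (b_j - a_j)$, no point of $D$ can be matched to the diagonal (its distance to the diagonal exceeds $\eps$) and no point of $D'$ can be unmatched, so the optimal matching is in fact a bijection $D \leftrightarrow D'$ with the $\ell^\infty$ displacement bound. After that the argument is the routine inequality-chasing sketched above, so I would state it compactly rather than belabor it.
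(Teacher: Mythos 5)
Your proposal is correct and takes essentially the same route as the paper: let $\delta$ be the minimal gap among the (pairwise distinct) coordinates of $D$, take $\eps$ a small fraction of $\delta$, use $\abs{D'}=\abs{D}$ to upgrade the bottleneck matching to a coordinate-wise bijection moving each coordinate by less than $\eps$, and conclude that all order relations among the coordinates --- hence genericity and the edge condition $a_j<a_k<b_j<b_k$ --- are preserved, giving $B(D')\isom B(D)$. Your explicit check that no point can be matched to the diagonal is a detail the paper leaves implicit, but it is the same argument.
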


\begin{proof}
  Let $E(D)$ be the set of all coordinates of points in $D$.
  Let $\delta = \min\{\abs{x-y} \st x \neq y \in E(D)\}$.
  Let $\eps < \frac{\delta}{4}$.
  Let $D'$ be a persistence diagram with $\abs{D'}=\abs{D}$ and $d_B(D,D')< \eps$.
  Then for all $(a,b) \in D$ there is a $(a',b') \in D'$ with $\norm{(a,b)-(a',b')}_{\infty} < \eps$.
  So $\abs{a'-a} < \eps$ and $\abs{b'-b} < \eps$.
  By the triangle inequality, the coordinates of points in $D'$ are distinct.

  By the construction of $D'$, there is a canonical bijection of the intervals in the barcodes of $D$ and $D'$.
  Note that by the definition of $\delta$, this implies that the nonempty intersections of pairs of intervals in the bar code of $D$ have length at least $\delta$.
  Since $\eps < \frac{\delta}{4}$, a pair of intervals in the bar code of $D'$ intersect if and only if the corresponding pair of intervals in $D$ intersect.
\end{proof}

\begin{corollary} \label{cor:connected2}
  Let $D$ be a generic and connected persistence diagram.
  Then there is an $\eps>0$ such that for all persistence diagrams $D'$ with
  $\abs{D'} = \abs{D}$ and $d_B(D,D')<\eps$,
  $D'$ is generic and connected.
\end{corollary}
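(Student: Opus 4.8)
The plan is to combine Proposition~\ref{prop:generic} with Lemma~\ref{lem:generic} and the stability of the bar code graph under small perturbations. The statement to prove is Corollary~\ref{cor:connected2}: if $D$ is generic and connected, then there is an $\eps>0$ so that every $D'$ with $\abs{D'}=\abs{D}$ and $d_B(D,D')<\eps$ is also generic and connected. Genericity of $D'$ is immediate from Proposition~\ref{prop:generic}, so the only new content is connectedness, and we get to choose the same $\eps<\frac{\delta}{4}$ used there, where $\delta = \min\{\abs{x-y}\st x\neq y\in E(D)\}$.

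First I would invoke Proposition~\ref{prop:generic} to obtain $\eps<\frac{\delta}{4}$ with the property that any $D'$ with $\abs{D'}=\abs{D}$ and $d_B(D,D')<\eps$ is generic and satisfies $B(D')\isom B(D)$; moreover, as recorded in the proof of that proposition, there is a canonical bijection between the intervals of the bar codes of $D$ and of $D'$, and a pair of intervals in the bar code of $D'$ intersects if and only if the corresponding pair in $D$ does. Second, I would observe that this last statement is exactly the assertion that the graph (Definition~\ref{def:graph}) of the bar code of $D'$ equals the graph of the bar code of $D$ under the canonical bijection of vertices. Third, since $D$ is connected, its bar code graph is connected by definition; hence the bar code graph of $D'$ is connected, i.e.\ $D'$ is connected. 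That completes the argument.

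The only point requiring slight care is that Proposition~\ref{prop:generic} is phrased in terms of $B(D)$ (the bipartite graph) rather than the bar code graph of Definition~\ref{def:graph}, so I would either quote the clause in its proof about intersecting pairs directly, or pass through the Lemma identifying $B(D)/\!\!\sim$ with the bar code graph to transfer connectedness. Neither requires new work. I expect no real obstacle here; the corollary is essentially a bookkeeping consequence of the preceding proposition, and the main subtlety — that small perturbations do not create or destroy interval intersections — has already been handled by the choice $\eps<\frac{\delta}{4}$.
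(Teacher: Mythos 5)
Your proposal is correct and follows essentially the same route the paper intends: the paper states Corollary~\ref{cor:connected2} without a separate proof precisely because it is a direct consequence of the proof of Proposition~\ref{prop:generic}, whose final clause (intervals of $D'$ intersect iff the corresponding intervals of $D$ do) identifies the bar code graphs and hence transfers connectedness. Your care in distinguishing the bipartite graph $B(D)$ from the bar code graph of Definition~\ref{def:graph} is appropriate and matches the paper's setup.
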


Now consider a sequence of persistence diagrams $D_1,\ldots,D_n$. Recall that we consider this to be a point in the product space of $n$ persistence diagrams (Section~\ref{sec:persistence}) with associated product metric \eqref{eq:product-metric} and product topology.

\begin{theorem} \label{thm:generic}
  Connected and arithmetically independent persistence diagrams are generic in the following sense. 
  \begin{enumerate}
  \item \label{it:generic1} They are dense. That is, given persistence diagrams $D_1,\ldots,D_n$ and an $\eps >0$ there exist connected and arithmetically independent persistence diagrams $D'_1,\ldots,D'_n$ with $d_B(D_i,D'_i) < \eps$ for all $i$.
  \item \label{it:generic2} If we restrict to persistence diagrams with the same cardinality then they are open.
    That is, given connected and arithmetically independent persistence diagrams $D_1,\ldots,D_n$,  there is some $\eps>0$ such that
    any persistence diagrams $D'_1,\ldots,D'_n$ with $\abs{D'_i} = \abs{D_i}$ and $d_B(D_i,D'_i) < \eps$ for all $i$, are connected and arithmetically independent. 
  \end{enumerate}
\end{theorem}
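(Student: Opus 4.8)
I would reduce both parts to a single linear-algebraic observation. As soon as the bipartite graphs $B(D_i)$ are pinned down --- which, by Proposition~\ref{prop:generic}, happens on an entire neighbourhood of any generic tuple --- every point of every set $C(D_i)$ is a \emph{fixed} affine-linear functional of the vector $v$ listing all coordinates $a_j,b_j$ of all the diagrams: it is one of $a_j$, $b_j$, $\tfrac12(a_j+b_j)$, or $\tfrac12(a_k+b_j)$. Hence clause~(1) of Definition~\ref{def:arithmetically-independent} is just genericity of each $D_i$; clause~(2) says that the finitely many ``collision'' functionals $c-c'$ with $c\in C(D_i)$, $c'\in C(D_j)$, $i\neq j$, are nonzero at $v$; and clause~(3) holds as soon as the ``progression defects'' $\ell_1+\ell_3-2\ell_2$ attached to those triples $(\ell_1,\ell_2,\ell_3)$ of elements of $C=\bigcup_iC(D_i)$ that are \emph{not} interval or intersection triples of some $D_i$ are nonzero at $v$.

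For part~\eqref{it:generic1}, the idea is: first make the diagrams connected, then generic, then arithmetically independent. Apply Lemma~\ref{lem:connected} to each $D_i$ to get a connected $D_i^{(1)}$ with $d_B(D_i,D_i^{(1)})<\eps/3$, then Lemma~\ref{lem:generic} to get a generic --- still connected --- $D_i^{(2)}$ with $d_B(D_i^{(1)},D_i^{(2)})<\eps/3$. Using Proposition~\ref{prop:generic} and Corollary~\ref{cor:connected2}, choose $\eps'\in(0,\eps/3)$ so that, identifying a neighbourhood of $(D_1^{(2)},\dots,D_n^{(2)})$ in the fixed-cardinality product space with an open set $W\subset\R^N$ (where $N=2\sum_i\abs{D_i^{(2)}}$), every tuple in $W$ is connected, has each entry generic, and has the same bipartite graphs $B(D_i)$, hence the same description of the sets $C(D_i)$. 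On $W$, clauses~(2) and~(3) fail only on the finitely many zero sets $\{\ell(v)=0\}$ and $\{\ell_1(v)+\ell_3(v)-2\ell_2(v)=0\}$ above; since (see below) none of these functionals vanishes identically, each such set is a \emph{proper} affine subspace, and their union is nowhere dense in $W$. Any $v\in W$ off this union and sufficiently close to the coordinate vector of $(D_1^{(2)},\dots,D_n^{(2)})$ yields $D_1',\dots,D_n'$ that are connected, generic, and arithmetically independent, with $d_B(D_i,D_i')<\eps$ by the triangle inequality.

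For part~\eqref{it:generic2}, start from connected and arithmetically independent $D_1,\dots,D_n$ and take $\eps$ below the radii supplied by Proposition~\ref{prop:generic} and Corollary~\ref{cor:connected2}, so that $\abs{D_i'}=\abs{D_i}$ and $d_B(D_i,D_i')<\eps$ already force each $D_i'$ to be generic, connected, and to have the same bipartite graph as $D_i$ --- so the elements of $C(D_i')$ are given by the same functionals, and the same finite lists of collision and progression-defect functionals govern clauses~(2) and~(3). By hypothesis these functionals are nonzero at the base tuple; being affine in $v$, they stay nonzero after shrinking $\eps$. Hence every such tuple is connected and arithmetically independent.

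The step I expect to be the crux is the claim, used in both parts, that the only identically-vanishing progression defect among these functionals is a definitional one coming from the interval and intersection triples of Section~\ref{sec:critical-points} --- equivalently, that the only ``forced'' three-term arithmetic progressions in $C$ are those triples. Since functionals from different diagrams involve disjoint coordinate variables, no nontrivial relation can mix diagrams; within a single diagram one checks by inspecting the coefficient patterns $a_j$, $b_j$, $\tfrac12a_j+\tfrac12b_j$, $\tfrac12a_k+\tfrac12b_j$ that if $\tfrac12(\ell_1+\ell_3)$ again lies in $C(D_i)$ then $(\ell_1,\tfrac12(\ell_1+\ell_3),\ell_3)$ is, up to reversal, an interval or an intersection triple. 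Granting this, the remaining work is routine bookkeeping with the cited lemmas.
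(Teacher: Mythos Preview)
Your argument is correct and takes a genuinely different route from the paper's. The paper proves part~(1) by a double induction --- outer on $n$ (adding one diagram at a time) and inner on $m$ (adding one point $(a'_m,b'_m)$ at a time) --- at each step invoking ``finitely many numbers to avoid''; for part~(2) it introduces an auxiliary set $C'(\mathcal D)$ of real numbers that would complete a three-term progression with two members of $C(\mathcal D)$, sets $\delta$ equal to the minimum gap in $C(\mathcal D)\amalg C'(\mathcal D)$, and finishes with a triangle-inequality argument at scale $\eps=\delta/4$. You instead freeze the bipartite graphs via Proposition~\ref{prop:generic} so that every element of each $C(D_i)$ is a fixed affine functional on $\R^N$, and observe that arithmetic independence fails precisely on a finite union of affine hyperplanes; density and openness then follow at once. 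The price is that your part~(1) needs the ``crux'' claim that no progression-defect functional $\ell_1+\ell_3-2\ell_2$ outside the interval/intersection triples vanishes identically. Your sketch of this is right; one clean way to organize the within-diagram check is to note that every $\ell\in C(D_i)$ has nonnegative coefficients in $\{0,\tfrac12,1\}$ summing to $1$, so $2\ell_2$ has entries in $\{0,1,2\}$, which forces $\ell_2$ to be a midpoint $(a_p+b_q)/2$ and then $\{\ell_1,\ell_3\}=\{a_p,b_q\}$. The paper's part~(2) sidesteps this claim entirely (it works with real numbers, not functionals), while its part~(1) in effect uses the same fact but leaves it inside the phrase ``finitely many numbers to avoid.'' Your approach makes the hyperplane structure explicit and handles both parts uniformly; the paper's is more hands-on and, for part~(2), yields an explicit $\eps$.
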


\begin{proof}
  (\ref{it:generic1}) The proof is by induction on $n$.
  If $n=0$ then the statement is trivially true.
  Assume that we have connected and arithmetically independent persistence diagrams $D'_1,\ldots,D'_{n-1}$ with $d_B(D_j,D'_j) < \eps$ for $1 \leq j \leq n-1$.
  By Lemmas \ref{lem:connected} and \ref{lem:generic} there exists a generic and connected persistence diagram $D'_n = \{(a_k,b_k)\}_{k=1}^m$ with $d_B(D_n,D'_n) < \frac{\eps}{2}$.
  We finish the proof by induction on $m$.
  If $m=0$ then we are done.
  Assume that $D'_1,\ldots,D'_{n-1},\{(a'_k,b'_k)\}_{k=1}^{m-1}$ is arithmetically independent. 
  By Corollary~\ref{cor:connected2}, there exists an $\eps'>0$ such that
   for all persistence diagrams $D''$ with
  $\abs{D''} = m$ and $d_B(D'',D'_n)<\eps'$,
  $D''$ is generic and connected.
  Let $\delta = \min(\frac{\eps}{4},\frac{\eps'}{2})$.
  Since there are only finitely many numbers to avoid, we can choose
  $a'_m \in [a_m-\delta]$ and $b'_m \in [b_m,b_m+\delta]$ such that $D'_1,\ldots,D'_{n-1},D_n'':=\{(a_k,b_k)\}_{k=1}^{m-1} \cup \{(a'_m,b'_m)\}$ is connected and arithmetically independent.
  Note that $d_B(D_n,D''_n) < \eps$.
   
  (\ref{it:generic2}) Let $D_1,\ldots,D_n$ be connected persistence diagrams that are arithmetically independent. Denote this sequence of persistence diagrams by $\mathcal{D}$. Using Corollary~\ref{cor:connected2} we can choose an $\eps'>0$ such that 
  for any persistence diagrams $D'_1,\ldots,D'_n$ with $\abs{D'_i} = \abs{D_i}$ and $d_B(D_i,D'_i) < \eps'$ for all $i$, each $D'_i$ is connected.

  Let $C(\mathcal{D})$ be the set of all critical points of the average landscape of $\mathcal{D}$.
  There are only finitely many points $a \in \R \setminus C(\mathcal{D})$ such that $a$ is part of a three term arithmetic progression in $C(\mathcal{D}) \cup \{a\}$.
  Let $C'(\mathcal{D})$ be the set of all such numbers. 

  Let $\delta = \min\{ \abs{x-y} \st x \neq y \in C(\mathcal{D}) \amalg C'(\mathcal{D})\}$.
  Let $\eps'' = \frac{\delta}{4}$.
  Consider persistence diagrams $D'_1,\ldots,D'_n$ with $\abs{D'_i} = \abs{D_i}$ and $d_B(D_i,D'_i) < \eps''$ for all $i$.
  Let $\mathcal{D'}$ denote this sequence of persistence diagrams.

  The assumptions imply that for each point $(a,b)$ in one of the persistence diagrams in $\mathcal{D}$ there is a corresponding point $(a',b')$ in the corresponding persistence diagram in $\mathcal{D'}$, and $\norm{(a,b)-(a',b')}_{\infty} < \eps''$. That is, $\abs{a-a'}<\eps''$ and $\abs{b-b'}<\eps''$.
  Thus we have the induced bijection between $C(\mathcal{D})$ and $C(\mathcal{D'})$ with corresponding points $x$ and $x'$ satisfying $\abs{x-x'}<\eps''$.
  Notice that since $D_i$ is generic, so is $D'_i$.
  Also, since the sets $S(D_i)$ are disjoint, so are the sets $S(D'_i)$.
  Furthermore, the assumptions imply that we have an induced correspondence between $C'(\mathcal{D})$ and $C'(\mathcal{D'})$ with corresponding points $y$ and $y'$ satisfying $\abs{y-y'}<2\eps''$.
  By the triangle inequality for $x' \in C(\mathcal{D'})$, $y' \in C'(\mathcal{D'})$, $\abs{x'-y'} > \delta - 3\eps'' > \eps''$.
  It follows that $\mathcal{D}$ is arithmetically independent.
  Let $\eps = \min(\eps',\eps'')$.
\end{proof}

\section{Metric comparison of persistence landscapes and persistence diagrams} \label{sec:metric}

In this section we show that the $L^{\infty}$ landscape distance can be much smaller than the corresponding bottleneck distance.

Given a persistence diagram $D$, let $\lambda(D)$ denote the corresponding persistence landscape.
In \cite[Theorem 12]{bubenik:landscapes} it was shown that $\norm{\lambda(D)-\lambda(D')}_{\infty} \leq d_B(D,D')$.

Here we will show the following.

\begin{proposition}
  Let $K > 0$. Then there is a pair of persistence diagrams such that
$\norm{\lambda(D)-\lambda(D')}_{\infty} \leq K d_B(D,D')$.
\end{proposition}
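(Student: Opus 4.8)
The plan is to exhibit, for any given $K>0$, an explicit pair of persistence diagrams whose landscape $L^\infty$-distance is at least a factor $1/K$ smaller than their bottleneck distance. (Strictly, the statement as written with ``$\leq$'' is trivially satisfied by taking $D=D'$; the intended content is that no reverse inequality $d_B(D,D') \leq C\norm{\lambda(D)-\lambda(D')}_\infty$ can hold with a uniform constant, so I will construct examples where $\norm{\lambda(D)-\lambda(D')}_\infty$ is arbitrarily small while $d_B(D,D')$ stays bounded below, which in particular makes the displayed inequality hold for the prescribed $K$.)

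First I would recall that moving a single point $(a,b)$ in a persistence diagram toward the diagonal — say replacing $(a,b)$ by the empty diagram, or by a point of persistence $\eps$ — changes the bottleneck distance by roughly $(b-a)/2$ (the cost of matching $(a,b)$ to the diagonal), whereas the persistence landscape of a single interval $(a,b)$ is the tent function $f_{(a,b)}$ of height $(b-a)/2$. So for a \emph{single} interval the two distances are comparable, and that is not enough. The key idea is to use \emph{many disjoint intervals of the same length}: take $D = \{(2j c, 2jc + \ell)\}_{j=1}^{m}$ consisting of $m$ pairwise-disjoint intervals each of length $\ell$ (choose the spacing $c > \ell$ so the tent functions have disjoint supports), and let $D' = \emptyset$. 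Then $d_B(D,D') = \ell/2$, independent of $m$. However — and this is the crucial point — because the supports are disjoint, the first landscape function $\lambda_1(D)$ is just the union of $m$ separate tents each of height $\ell/2$, and \emph{all higher landscape functions vanish}, so $\norm{\lambda(D)}_\infty = \ell/2$ as well. That does not yet shrink the landscape distance.

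The actual obstacle, and where I would put the real work, is arranging the geometry so that the landscape norm genuinely collapses while the bottleneck distance does not. The right construction is to \emph{stack} intervals rather than spread them: let $D = \{(0,\ell)\}$ be a single short interval and $D' = \{(0,\ell), (\delta, \ell+\delta), (2\delta, \ell+2\delta), \ldots\}$ a ``staircase'' of $m$ nearly-coincident translates of it with tiny offset $\delta$. Heuristically the landscapes of $D$ and $D'$ agree up to the first level (both have $\lambda_1$ essentially the tent of height $\ell/2$), and the higher levels of $\lambda(D')$ have height at most $O(\delta)$; so $\norm{\lambda(D)-\lambda(D')}_\infty = O(\delta + (\text{edge effects}))$ can be made as small as we like. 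Meanwhile $d_B(D,D')$ is bounded below by a constant of order $\ell$ because $D'$ has $m-1$ more points than $D$, so the best matching must send $m-1$ of $D'$'s points to the diagonal, each at cost $\approx \ell/2$; the bottleneck (max, not sum) distance is then $\approx \ell/2$, independent of $\delta$ and $m$. Choosing $\delta$ small enough that $\norm{\lambda(D)-\lambda(D')}_\infty \leq K \cdot (\ell/2) \leq K\, d_B(D,D')$ completes the proof.

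The main thing to verify carefully — and the one genuine computation I would not skip — is the estimate $\norm{\lambda(D)-\lambda(D')}_\infty = O(\delta)$ for the staircase example: one must check that adding $m-1$ slightly-shifted copies of a tent function changes the $\kmax$ at every level $(k,t)$ by only $O(\delta)$. This follows because at any point $t$, the values $f_{(j\delta,\,\ell+j\delta)}(t)$ for $j = 0,\ldots,m-1$ differ from $f_{(0,\ell)}(t)$ by at most $\delta$ (each is a $\delta$-shift of the same tent), so their $k$th largest differs from $f_{(0,\ell)}(t)$ by at most $\delta$ for every $k \geq 1$, while $\lambda_k(D) = 0$ for $k \geq 2$. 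Hence $\abs{\lambda_k(D)(t) - \lambda_k(D')(t)} \leq \delta$ for all $k,t$, and we are done after picking $\delta < K\ell/2$.
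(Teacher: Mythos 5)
Your reading of the intended content is right (the point is that no reverse bound $d_B \leq C\,\norm{\lambda(D)-\lambda(D')}_\infty$ can hold, so one wants pairs whose landscape distance is arbitrarily small compared to their bottleneck distance), but the staircase construction does not deliver this, and the error sits exactly in the ``one genuine computation'' you flag. Take $D=\{(0,\ell)\}$ and $D'=\{(j\delta,\ell+j\delta)\}_{j=0}^{m-1}$. At $t=\ell/2$ the tent values are $f_{(j\delta,\ell+j\delta)}(\ell/2)=\ell/2-j\delta$, so the $k$th largest is $\ell/2-(k-1)\delta$; hence $\lambda_2(D')(\ell/2)=\ell/2-\delta$ while $\lambda_2(D)(\ell/2)=0$, giving $\norm{\lambda(D)-\lambda(D')}_\infty\geq \ell/2-\delta$, essentially equal to $d_B(D,D')=\ell/2$. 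The example therefore exhibits a ratio near $1$, not near $0$. Your last paragraph is internally inconsistent: you assert that the $k$th largest of the shifted tents is within $\delta$ of $f_{(0,\ell)}(t)$ for every $k$, and in the same breath conclude $\abs{\lambda_k(D)(t)-\lambda_k(D')(t)}\leq\delta$ with $\lambda_k(D)=0$ for $k\geq 2$; at $t=\ell/2$ these two statements contradict each other. The conceptual obstruction is that the landscape records multiplicity through the depth index $k$ (this is precisely why it is invertible), so stacking $m$ near-duplicates of one bar produces $m$ near-duplicate landscape levels of height about $\ell/2$, and the landscape distance to the single-bar diagram cannot collapse. (A minor additional slip: the $j$th shift moves the tent by $j\delta$, so the perturbations are of size up to $(m-1)\delta$, not $\delta$; but that is not the real problem.)

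The paper's construction is genuinely different and is worth internalizing. Both diagrams consist of $2n$ long bars of half-length $3n$ whose midpoints interlace (odd integers $\pm1,\ldots,\pm(2n-1)$ for $D_1$, even integers $0,\pm2,\ldots,\pm(2n-2)$ for $D_2$), so every landscape level of one diagram is within $1$ of the corresponding level of the other; the remaining point of $D_2$ is a medium bar $(-n,n)$ placed so that its tent lies (up to $1$) underneath the canopy of long bars, exactly where the deepest landscape level already sits. Thus $\norm{\lambda(D_1)-\lambda(D_2)}_\infty=1$, while any bottleneck matching must send $(-n,n)$ to the diagonal or to a long bar at cost of order $n$, giving $d_B(D_1,D_2)=2n+1$; letting $n\to\infty$ defeats every constant $K$. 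The moral your approach misses is that to fool the landscape one must hide a bar of large persistence beneath many longer, horizontally spread bars, rather than pile up copies of a short bar.
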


\begin{proof}
  Consider
  \begin{gather*}
    D_1 = \{\pm(-3n-1+2i,3n-1+2i))\}_{i=1}^n, \text{ and }\\
    D_2 = \{\pm(-3n+2i,3n+2i)\}_{i=1}^{n-1} \cap \{(-3n,3n),(-n,n)\}
  \end{gather*}
  See Figure~\ref{fig:counter-example} where $n=4$.
  Then $\norm{\lambda(D_1)-\lambda(D_2)}_{\infty} = 1$, but $d_B(D_1,D_2) = 2n+1$.
\end{proof}

\begin{figure}
  \centering
  \begin{tikzpicture}[scale=0.3]
    \def\n{4}
    \draw[->] (-5*\n-1,0) -- (5*\n+1,0); 
    \foreach \x in {-20,...,20}
      \draw (\x,0.1) -- (\x,-0.1);
    \foreach \x in {-20,-15,...,20}
      \draw (\x,0) node[anchor=north] {\x};
    \foreach \i in {1,2,...,\n}
    {
      \draw (-3*\n - 1 + 2*\i, 0) -- (-1+2*\i,3*\n) -- (3*\n -1 +2*\i,0);
      \fill (-1+2*\i,3*\n) circle (2mm);
      \draw (3*\n + 1 - 2*\i, 0) -- (1-2*\i,3*\n) -- (-3*\n +1 -2*\i,0);
      \fill (1-2*\i,3*\n) circle (2mm);
    }
    \foreach \i in {0,1,...,3}
    {
      \draw [dashed] (-3*\n + 2*\i, 0) -- (2*\i,3*\n) -- (3*\n +2*\i,0);
      \draw (2*\i,3*\n) circle (2mm);
      \draw [dashed] (3*\n - 2*\i, 0) -- (-2*\i,3*\n) -- (-3*\n -2*\i,0);
      \draw (-2*\i,3*\n) circle (2mm);
    }
    \draw [dashed] (-\n, 0) -- (0,\n) -- (\n,0);
    \draw (0,\n) circle (2mm);
  \end{tikzpicture}
  \caption{Two persistence diagrams, $D_1$ and $D_2$ (with filled circles and open circles, respectively) whose persistence landscape distance is much smaller than their bottleneck distance. Each point $(b,d)$ in the persistence diagram is plotted with coordinates $(m,h)$, where $m=\frac{b+d}{2}$ and $h=\frac{d-b}{2}$. The corresponding persistence landscapes, $\lambda(D_1)$ and $\lambda(D_2)$ are given by solid and dashed lines respectively. Observe that $\norm{\lambda(D_1)-\lambda(D_2)}_{\infty} = 1$ but $d_B(D_1,D_2) = 9$.}
  \label{fig:counter-example}
\end{figure}
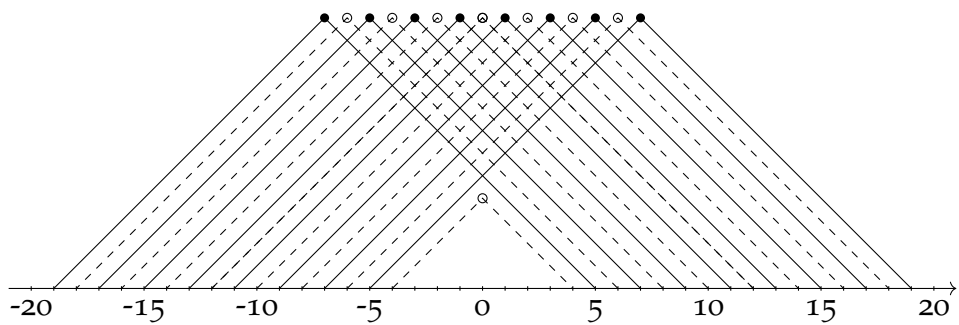
 
\subsection*{Acknowledgments}

The author would like to acknowledge the support of the Army Research Office [Award W911NF1810307], National Science Foundation [DMS - 1764406] and the Simons Foundation [Grant number 594594].
He would also like to thank
Pawel Dlotko, Michael Kerber, and Oliver Vipond for helpful conversations, 
Leo Betthauser, Nikola Milicevic, and Alex Wagner for proofreading an earlier draft,
and the Mathematisches Forschungsinstitut Oberwolfach (MFO) where some of this work was started.

 

\end{document}